\newtheorem{theorem}{Theorem}[section]
\newtheorem{corollary}[theorem]{Corollary}
\newtheorem{lemma}[theorem]{Lemma}
\newtheorem{proposition}[theorem]{Proposition}
\theoremstyle{definition}
\newtheorem{definition}[theorem]{Definition}
\numberwithin{equation}{section}
 \numberwithin{figure}{section}
\newcommand{\sgn}{\operatorname{sgn}}
\theoremstyle{remark}
\newtheorem{remark}[theorem]{Remark}
\begin{document}
\title[Mullineux fixed]{Generating functions for fixed points of the Mullineux map}
\author{\sc David J. Hemmer}
\address{Department of Mathematical Sciences\\ Michigan Technological University \\
1400 Townsend Drive\\Houghton, MI~49931, USA}
 \email{djhemmer@mtu.edu}

\date{\today}

\subjclass[2020]{Primary 05E10, Secondary 20C30}

\begin{abstract}
Mullineux defined an involution on the set of $e$-regular partitions of $n$. When $e=p$ is prime, these partitions label irreducible symmetric group modules in characteristic $p$. Mullineux's conjecture, since proven, was that this ``Mullineux map" described the effect on the labels of taking the tensor product with the one-dimensional signature representation. Counting irreducible modules fixed by this tensor product is related to counting irreducible modules for the alternating group $A_n$ in prime characteristic. In 1991, Andrews and Olsson worked out the generating function counting fixed points of Mullineux's map when $e=p$ is an odd prime (providing evidence in support of Mullineux's conjecture). In 1998, Bessenrodt and Olsson counted the fixed points in a $p$-block of weight $w$. We extend both results to arbitrary $e$, and determine the corresponding generating functions. When $e$ is odd but not prime the extension is immediate, while $e$ even requires additional work and the results, which are different, have not appeared in the literature.
\end{abstract}
\maketitle

\section{Introduction}

Let $S_n$ denote the symmetric group on $n$ letters. Recall that a partition $\lambda=(\lambda_1, \lambda_2, \ldots, \lambda_s) \vdash n$ is $e$\emph{-regular} if no part repeats $e$ or more times. Let $k$ be an algebraically closed field of characteristic $p$. The irreducible $kS_n$ modules are labelled by $p$-regular partitions \cite[Chapter 11]{Jamesbook} and are denoted $\{D^\lambda \mid \lambda \textrm{ is p-regular}\}$. Define an involution $P$ on the $p$-regular partitions by $D^\lambda \otimes \sgn \cong D^{P(\lambda)}.$ In \cite{MullineuxBijectionsofpregular}, Mullineux defined a combinatorial map $\lambda \rightarrow m_e(\lambda)$ on the set of $e$-regular partitions. He conjectured for $e=p$ a prime that $m_p=P$. Almost twenty years later, Kleshchev \cite{KleshchevBranchingIII} described $P$ and Ford and Kleshchev \cite{FordKleshMullineux} proved the conjecture by proving Kleshchev's description matched Mullineux's. The bijection $m_e$ is defined combinatorially on $e$-regular partitions for $e$ arbitrary, and can be interpreted similarly using irreducible representations of a certain Hecke algebra \cite[Chapter 6.3]{Mathasbook}. For $e=2$ the sign representation is trivial and the Mullineux map is the identity. Combinatorial properties of this map have inspired much research, often independent of representation theoretic applications. Recently there has been a flurry of research in this area, including multiple new descriptions of the Mullineux map \cite{FayersCrystalsRegularisation, JaconMullineuxandcrystal}, so having complete information about fixed points in each $e$-block for arbitrary $e$ is particularly timely.

Let $A_n$ be the alternating group and let the characteristic of $k$ be an odd prime $p$. One can count irreducible $kA_n$ modules in two ways: group theoretically by counting $p$-regular conjugacy classes of $A_n$, or by restricting modules from $S_n$ using Clifford theory and counting in terms of fixed points of the  map $P$. In 1991 Andrews and Olsson, using Olsson's work in \cite{OlssonTheNumberofModularCharacters}, counted the fixed points of $m_p$:

\begin{theorem}\label{thm: Olsonfixedpoint}\cite[Propositions 2,3]{AndrewsOlssonPartitionIdentities}
Let $p>2$ be prime. The number of fixed points of $m_p$ is the number of partitions of $n$ with distinct odd parts, none of which are divisible by $p$.
\end{theorem}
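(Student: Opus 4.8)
The plan is to translate everything into Mullineux's combinatorial description of $m_p$ via the Mullineux symbol, reduce the fixed-point condition to equations on the columns of that symbol, and then count the admissible solutions by matching their generating function against $\prod_{m \text{ odd},\, p\nmid m}(1+q^m)$. I would first recall the Mullineux symbol $G(\lambda)=\begin{pmatrix} a_1 & \cdots & a_k \\ r_1 & \cdots & r_k\end{pmatrix}$, where $a_i$ is the number of nodes in the $p$-rim of the $i$-th partition obtained by successively stripping $p$-rims from $\lambda$ and $r_i$ is the number of rows of that partition, so that $|\lambda|=\sum_i a_i$. Mullineux's algorithm computes $m_p$ on symbols by fixing the top row and sending $r_i \mapsto a_i-r_i+\epsilon_i$, where $\epsilon_i=0$ if $p\mid a_i$ and $\epsilon_i=1$ otherwise; hence $\lambda=m_p(\lambda)$ if and only if $2r_i=a_i+\epsilon_i$ for every $i$. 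Solving column by column, I would show that a fixed point is exactly a partition whose symbol has every column of one of two types: an \emph{odd} column, with $a_i$ odd and prime to $p$ and $r_i=(a_i+1)/2$; or an \emph{even} column, with $2p\mid a_i$ and $r_i=a_i/2$ (here $p$ odd is used to pass from $p\mid a_i$ with $a_i$ even to $2p\mid a_i$).

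Next I would impose Mullineux's admissibility conditions on the symbol to determine precisely which sequences $(a_1,\ldots,a_k)$ of odd and even columns actually occur. This forces the top row to be strictly decreasing, together with finer constraints that govern when an even column may sit in front of the columns following it: for instance the single column $\begin{pmatrix} 2p \\ p\end{pmatrix}$ is \emph{not} admissible, whereas $\begin{pmatrix} 2p & 1 \\ p & 1\end{pmatrix}$ is. Writing $\Phi_p(q)=\sum_{\lambda=m_p(\lambda)}q^{|\lambda|}$ as the generating function summed over these admissible ``fixed symbols'' (using $q^{|\lambda|}=q^{\sum_i a_i}$), the theorem becomes the single assertion $\Phi_p(q)=\prod_{m \text{ odd},\, p\nmid m}(1+q^m)$.

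The final and hardest step is this equality, which is a genuine partition identity of Andrews--Olsson type rather than a term-by-term comparison: the even columns have no direct counterpart among the distinct odd parts on the right, so one must show that the admissibility rules controlling when an even column $2pj$ may appear are exactly accounted for by merging it with neighbouring odd columns to form a single odd part prime to $p$ (for example the odd and even columns of $\begin{pmatrix} 2p & 1 \\ p & 1\end{pmatrix}$ combine to the part $2p+1$). I expect this identity --- equivalently, an explicit weight-preserving bijection between admissible fixed symbols and partitions of $n$ into distinct odd parts none divisible by $p$ --- to be the main obstacle, and it is precisely the content supplied by the Andrews--Olsson partition identity.
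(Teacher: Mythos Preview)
Your overall strategy coincides with the paper's: reduce $m_p(\lambda)=\lambda$ to the column equations $2r_i=a_i+\epsilon_i$, translate Mullineux's admissibility conditions into constraints on the top row $(a_1,\ldots,a_k)$, and then invoke the Andrews--Olsson partition identity to identify the resulting set with partitions into distinct odd parts prime to $p$. There is one error in your translation, however: the admissibility conditions do \emph{not} force the top row to be strictly decreasing. Mullineux's condition (iii) allows $r_i=r_{i+1}$ provided $p\mid a_i$; for a fixed point this becomes ``$a_i=a_{i+1}$ is permitted exactly when $a_i$ is even'' (i.e.\ $2p\mid a_i$). The full set of constraints is: each $a_i$ is either odd and prime to $p$ or a multiple of $2p$; $0\le a_i-a_{i+1}\le 2p$; equality $a_i=a_{i+1}$ only when $a_i$ is even; the maximal gap $a_i-a_{i+1}=2p$ only when $a_i$ is odd; and $a_k<2p$. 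This is precisely the set $P_2(A;2p,n)$ of Andrews--Olsson with $A=\{1,3,\ldots,p-2,p+2,\ldots,2p-1\}$, and their main theorem supplies the bijection with $P_1(A;2p,n)$. So your intuition that the hardest step is a genuine Andrews--Olsson-type identity is correct, but be aware that the bijection must also handle repeated multiples of $2p$, not just single even columns merged with a neighbour.
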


This answer agreed with the known representation theoretic count for fixed points of $P$, providing evidence for the as-yet-unproven Mullineux conjecture by showing $m_p$ had the expected number of fixed points. Later Bessenrodt and Olsson refined this by computing fixed points in an arbitrary $p$-block of weight $w$:

\begin{theorem}\cite[Theorem 3.5]{BessendrodtOlssonOnResidueSymbols}
\label{thm: BessOlssonFixedpointsbyweight}Let $p$ be an odd prime, and $\mu \vdash n-pw$ be a self-conjugate $p$-core. For $w$ even, the number of $\lambda \vdash n$ with $m_p(\lambda)=\lambda$ and having $p$-core $\mu$  is given by the number of $(p-1)/2$-tuples of partitions with total weight $w/2$. For $w$ odd, there are no such partitions.

\end{theorem}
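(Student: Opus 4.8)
The plan is to encode a $p$-regular partition by the combinatorial data on which $m_p$ acts transparently, translate into that data the three quantities in play (the fixed-point condition, the $p$-core, and the $p$-weight), and then count. First I would record why the hypothesis on $\mu$ is the right one. Since $D^\lambda \otimes \sgn \cong D^{m_p(\lambda)}$ and, at the level of ordinary characters, $\chi^\lambda \otimes \sgn = \chi^{\lambda'}$, tensoring with $\sgn$ sends the block with $p$-core $\nu$ to the block with $p$-core $\nu'$ while preserving the weight (Nakayama). Hence a fixed point $m_p(\lambda)=\lambda$ forces $\mathrm{core}(\lambda)=\mathrm{core}(\lambda)'$, so all fixed points lie in blocks of self-conjugate $p$-core, and we may fix such a $\mu$ and a weight $w$.

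Next I would pass to the Mullineux symbol $\left(\begin{smallmatrix} a_1 & \cdots & a_k \\ r_1 & \cdots & r_k\end{smallmatrix}\right)$ obtained by iteratively stripping $p$-rims, and use Mullineux's rule that $m_p$ fixes the top row and replaces each $r_i$ by $a_i-r_i+\varepsilon_i$, where $\varepsilon_i=0$ if $p\mid a_i$ and $\varepsilon_i=1$ otherwise. The fixed-point equation $2r_i=a_i+\varepsilon_i$ then splits the columns into two types: $a_i$ odd with $p\nmid a_i$, or $a_i$ even with $p\mid a_i$. Equivalently — and this is the form I would really use — I would invoke the Bessenrodt--Olsson residue symbol, in which $m_p$ acts simply by negating every residue modulo $p$, so that the fixed points are exactly the symbols invariant under $r\mapsto -r$.

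The technical heart of the argument is to convert this residue symmetry into $p$-quotient data. Transferring the symbol to the $p$-abacus, I would show that invariance under $r\mapsto -r$ corresponds to the reflection of runners pairing runner $i$ with runner $p-1-i$, forcing $\lambda^{(i)}=\bigl(\lambda^{(p-1-i)}\bigr)'$ on the $p$-quotient $(\lambda^{(0)},\dots,\lambda^{(p-1)})$ of a fixed point, together with the self-conjugacy of $\mu$ already established. The one self-paired runner must be handled separately, and the crucial claim is that the fixed-point condition forces \emph{its component to be empty}. This is precisely where the even/odd dichotomy on the $a_i$ enters and where the real work lies: it is exactly the point at which Mullineux fixed points diverge from ordinary self-conjugate partitions, whose central quotient component is merely required to be self-conjugate rather than trivial. (Concretely, for $p=3$ the self-conjugate partition $(2,1)$ has nonempty central component and is \emph{not} $m_3$-fixed, whereas $(3,1,1)$ has empty quotient and is fixed.) I expect this matching — identifying residue negation with the runner reflection and proving the central component vanishes — to be the main obstacle; everything before and after it is comparatively routine.

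Granting the structural description, the count is immediate. The free data is the ordered tuple $(\lambda^{(0)},\dots,\lambda^{((p-3)/2)})$, a $(p-1)/2$-tuple of partitions, the remaining components being determined by conjugation and the central one being empty; and the weight decomposes as $w=2\sum_{i=0}^{(p-3)/2}\lvert\lambda^{(i)}\rvert$. This equality forces $w$ to be even, yielding the vanishing statement for $w$ odd, and for $w$ even it puts the fixed points in bijection with $(p-1)/2$-tuples of partitions of total weight $w/2$, as claimed. As a final sanity check I would sum this block count over self-conjugate $p$-cores and all weights and confirm that it reproduces the Andrews--Olsson total of Theorem~\ref{thm: Olsonfixedpoint}.
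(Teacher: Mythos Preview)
Your plan breaks down at the ``technical heart'': the claim that negation on the residue symbol transports to the reflection $i\leftrightarrow p-1-i$ on the ordinary $p$-abacus, yielding $\lambda^{(i)}=(\lambda^{(p-1-i)})'$ with the self-paired component empty, is false. Take $p=3$ and $\lambda=(4,1,1)$. One checks $G_3(\lambda)=\left(\begin{smallmatrix}5&1\\3&1\end{smallmatrix}\right)$ and $m_3(\lambda)=\lambda$; the $3$-core is empty and $w=2$. But the first-column hook lengths are $\{6,2,1\}$, so the $3$-quotient is $((2),\emptyset,\emptyset)$ (in any of the standard conventions, up to cyclic relabelling). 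There is no way to read this as two mutually conjugate components together with an empty central one: the only nontrivial component $(2)$ would have to pair with an $\emptyset$, and $(2)'\ne\emptyset$. So Mullineux fixed points simply do not admit the $p$-quotient description you are proposing; the residue symbol is built from the Mullineux symbol, not from the $p$-abacus, and there is no clean dictionary between the two at the level of individual partitions.

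The actual Bessenrodt--Olsson argument (which the paper follows and extends) takes a different route. One first pushes the fixed point $\lambda$ through the Andrews--Olsson bijection to a partition $\tau$ with distinct odd parts not divisible by $p$; Proposition~\ref{prop: BOnvectorfromMull} and Lemma~\ref{lem:Bessbijection} guarantee that this bijection preserves the data needed to recover the $e$-core. One then works with the $2p$-\emph{bar} core and $2p$-\emph{bar} quotient of $\tau$ on the Morris--Yaseen abacus: since all parts of $\tau$ are odd and avoid multiples of $p$, only the odd runners $\ne p$ carry beads, the bar quotient is a $(p-1)/2$-tuple of partitions, and the bar weight is $w/2$. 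The symmetry you are looking for lives on the bar abacus of $\tau$, not on the ordinary $p$-abacus of $\lambda$; replacing the former by the latter is exactly where your sketch goes wrong.
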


Our main results are to extend Theorems \ref{thm: Olsonfixedpoint} and \ref{thm: BessOlssonFixedpointsbyweight} to arbitrary $e$. For $e$ odd but not prime it is a simple observation that the original proofs carry over, while for $e$ even additional work is needed, including dealing with the unusual setting of using $t$-bar cores for even $t$. The author would like to acknowledge his colleague William Keith for useful discussions about generating functions. He would also like to thank the two anonymous referees for a careful reading of the paper and useful suggestions.

\section{The Mullineux Map, $e$-cores and $e$-weights}

 For a partition $\lambda=(\lambda_1, \lambda_2, \ldots, \lambda_s)$ with Young diagram $[\lambda]$, define the \emph{rim} of $\lambda$ to be the boxes along the southeast edge of the diagram, i.e. boxes $(i,j)\in [\lambda]$ with $(i+1,j+1)\not\in [\lambda]$. Now consider a subset of the rim defined as follows, and called the $e$-rim. Starting at the top right of the rim, take the first $e$ elements on the rim. Then move to the rightmost element of the rim in the next row, and take the next $e$ elements. Continue until the final row is reached, observing that the final segment may contain fewer than $e$ boxes.

For example if $\lambda=(7,7,7,4,4,1,1)\vdash 31$ and $e=5$ we have the $e$-rim:\\

\begin{figure}[h]
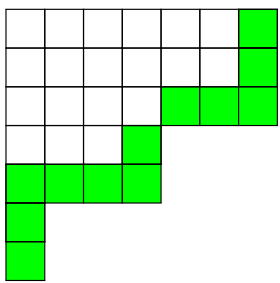

  \centering

\ytableausetup{centertableaux, boxsize=.5cm}
\ytableaushort
{}
*{7,7,4,4,1,1}
%*[*(green) \bullet]{0,0,5+2,0,1+3,0,1}
*[*(green)]{6+1,6+1,4+3,3+1,4,1,1}\\
 \caption{The $5$-rim of $\lambda=(7,7,7,4,4,1,1)$.}
\label{fig:defineerim}
\end{figure}
Let $a_1$ be the number of boxes and $r_1$ be the number of rows  in the $e$-rim, so in our example $a_1=12$, $r_1=7$.

To define the Mullineux symbol $G_e(\lambda)$, remove the $e$-rim, and then calculate the $e$-rim of what remains to determine $(a_2, r_2)$. Continue this process until all boxes are removed. Assemble these numbers in an array, called the \emph{Mullineux symbol} of $\lambda$:
\begin{equation}
\label{eq: def mullineux symbol}
G_e(\lambda)=\left(
                 \begin{array}{cccc}
                   a_1 & a_2 & \cdots & a_k \\
                   r_1 & r_2 & \cdots & r_k \\
                 \end{array}
               \right).
               \end{equation}

Notice that $(a_1,a_2,\ldots, a_k)$ is also a partition of the same integer $n$. For example from Figure \ref{fig:examplemullsymbol} we see that:

$$G_5(7,7,7,4,4,1,1)=\left(
                 \begin{array}{ccccc}
                   12 & 8 & 5&4&2 \\
                        7&4&3&3&2\\
                 \end{array}
               \right).$$

\begin{figure}
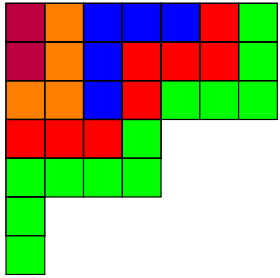

  \centering
  \ytableausetup{centertableaux, boxsize=.5cm}
\ytableaushort
{}
*{7,7,4,4,1,1}
*[*(green)]{6+1,6+1,4+3,3+1,4,1,1}
*[*(red)]{5+1,3+3,3+1,0+3}
*[*(blue)]{2+3,2+1,2+1,0+3}
*[*(orange)]{1+1,1+1,0+2}
*[*(purple)]{0+1,0+1}
  \caption{Calculating the Mullineux symbox $G_5(7,7,7,4,4,1,1)$}
  \label{fig:examplemullsymbol}
\end{figure}

Now define $\epsilon_i=0$ if $e \mid a_i$ and $\epsilon_i=1$ otherwise, and set $s_i=a_i-r_i+\epsilon_i.$
Then we have Mullineux's conjecture (now theorem):

\begin{proposition}
\label{prop:MullMap}
  Let $\lambda \vdash n$ be $e$-regular with Mullineux symbol

\begin{equation*}
G_e(\lambda)=\left(
                 \begin{array}{cccc}
                   a_1 & a_2 & \cdots & a_k \\
                   r_1 & r_2 & \cdots & r_k \\
                 \end{array}
               \right).
\end{equation*} Then the Mullineux symbol for $P(\lambda)$ is:

\begin{equation*}
\label{eq: mullformlambda}
G_e(P(\lambda))=\left(
                 \begin{array}{cccc}
                   a_1 & a_2 & \cdots & a_k \\
                   s_1 & s_2 & \cdots & s_k \\
                 \end{array}
               \right).
\end{equation*}
\end{proposition}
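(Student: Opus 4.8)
The cleanest route is to observe that the displayed rule is precisely Mullineux's original combinatorial prescription for $m_e$ \cite{MullineuxBijectionsofpregular}, so at the level of this paper the statement is a restatement of the definition and requires no representation theory; the identification $m_p=P$ when $e=p$ is the separate and far deeper Ford--Kleshchev theorem, which plays no role here since the assertion concerns only symbols. To give a self-contained justification, the plan is to verify that the rule $r_i \mapsto s_i = a_i - r_i + \epsilon_i$ sends a genuine Mullineux symbol to a genuine Mullineux symbol (well-definedness), and that it squares to the identity so that $m_e$ is an involution.

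The involution property I would dispose of first, since it is essentially free: the top row $(a_1,\ldots,a_k)$ is unchanged and each $\epsilon_i$ depends only on $a_i$, being $0$ exactly when $e \mid a_i$, so applying the rule twice returns $a_i - s_i + \epsilon_i = a_i - (a_i - r_i + \epsilon_i) + \epsilon_i = r_i$. Everything therefore reduces to well-definedness. For that I would first record the inequalities characterizing which arrays actually arise as Mullineux symbols: since the $e$-rim meets every row, each $r_i$ equals the number of rows of the partition remaining after $i-1$ strips, forcing $r_1 \ge r_2 \ge \cdots \ge r_k > 0$, while the $a_i$ obey the known relations bounding how fast the rim can shrink, together with a condition sensitive to divisibility by $e$ that encodes $e$-regularity. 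The content is then that the sequence $(s_i)$ satisfies these same constraints.

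The genuine obstacle is this last verification, which is a case analysis according to whether $e$ divides $a_i$ and $a_{i+1}$ (four cases per adjacent pair), since the corrections $\epsilon_i,\epsilon_{i+1}$ enter the comparison of consecutive columns. In each case I would deduce the monotonicity $s_i \ge s_{i+1}$ and the cross-column inequalities from the corresponding relations among the $a$'s and $r$'s, and check separately that $0 < s_i \le a_i$ so that each individual column stays admissible. The bookkeeping with the $\pm\epsilon_i$ shifts is where all the friction sits; once it is settled, well-definedness and hence the proposition follow by an entirely combinatorial argument, in line with Mullineux's formulation.
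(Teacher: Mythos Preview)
Your reading is correct: the paper does not prove this proposition at all. It introduces $m_e$ as Mullineux's combinatorial map, defines the Mullineux symbol, and then states the proposition with the preamble ``Now we have Mullineux's conjecture (now theorem)'', treating the displayed rule as Mullineux's own description of his map from \cite{MullineuxBijectionsofpregular}. No argument is supplied, and none is needed at the level of this paper --- exactly as you say in your first sentence.

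Your additional sketch (involution from the top row being fixed, and well-definedness via the four-case analysis on divisibility of $a_i,a_{i+1}$ by $e$) is a reasonable outline of what a self-contained verification would look like, and it is more than the paper offers. The paper instead simply quotes Mullineux's characterization of valid symbols (your inequalities) as Proposition~\ref{prop:Mull conditions}, citing \cite[Theorem 3.6]{MullineuxBijectionsofpregular}, and uses it downstream without reproving that $r_i\mapsto s_i$ preserves those conditions. So your proposal is correct and, in its opening observation, matches the paper's stance; the remainder goes beyond it.
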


It is easy to reconstruct $\lambda$ from $G_e(\lambda)$ so Proposition \ref{prop:MullMap} gives a combinatorial description of the Mullineux map. For example with $\lambda$ as in Figure \ref{fig:examplemullsymbol}
$$G_5(m_5(7,7,7,4,4,1,1))=\left(
                 \begin{array}{ccccc}
                   12 & 8 & 5&4&2 \\
                        6&5&2&2&1\\
                 \end{array}
               \right)$$
               and $m_5(7,7,7,4,4,1,1)=(12,9,4,2,2,2).$

In his original paper Mullineux described necessary and sufficient conditions for such an array to arise as the Mullineux symbol of an $e$-regular partition:

\begin{proposition}\cite[Theorem 3.6]{MullineuxBijectionsofpregular}\label{prop:Mull conditions} An array as in \eqref{eq: def mullineux symbol} is the Mullineux symbol of an $e$-regular partition of $n=\sum a_i$ if and only if:
\begin{itemize}
  \item[(i)] $0 \leq r_i-r_{i+1} \leq e$;
  \item[(ii)] $r_i-r_{i+1} + \epsilon_{i+1} \leq a_i-a_{i+1} \leq r_i-r_{i+1} + \epsilon_{i+1}+e$;
  \item[(iii)]$r_i=r_{i+1} \Rightarrow e\mid a_i$;
  \item[(iv)]$r_i-r_{i+1}=e \Rightarrow e \nmid a_i$;
   \item[(v)]  $0 \leq a_k-r_k<e$
  \item[(vi)] 1 $\leq r_k \leq e$ and if $r_k=e$ then $a_k-r_k>0.$

\end{itemize}
\end{proposition}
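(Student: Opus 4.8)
The plan is to prove the two implications together by induction on the number $k$ of columns, using a single $e$-rim removal as the inductive step. The key structural fact, immediate from the definition of the Mullineux symbol, is that if $\lambda'$ is obtained from $\lambda$ by deleting its first $e$-rim, then $G_e(\lambda')$ equals $G_e(\lambda)$ with its first column deleted. Hence the symbol is assembled one column at a time, and the whole statement reduces to understanding (a) which partitions coincide with their own $e$-rim (the \emph{terminal}, single-column case), and (b) exactly which pairs $(a_1,r_1)$ can occur as the first $e$-rim sitting on top of a given $e$-regular partition. Combined with the fact, noted above, that $\lambda$ can be reconstructed from $G_e(\lambda)$, this lets me interpret the claim as the assertion that the reconstruction map is a bijection from arrays satisfying (i)--(vi) onto $e$-regular partitions.

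First I would settle the terminal case, which accounts for conditions (v) and (vi). Since the $e$-rim is contained in the ordinary rim, a partition $\nu$ equal to its own $e$-rim must already equal its rim, and a short argument shows this happens precisely for hooks $\nu=(\nu_1,1^{r-1})$. Tracing the rim walk on a hook, the $e$-rim fails to recover the whole hook exactly when a segment turns down into a lower row before the arm is exhausted, which occurs if and only if $\nu_1>e$; thus $\nu$ is terminal iff it is a hook with $\nu_1\le e$, that is, $0\le a_k-r_k<e$, which is (v). The row bound $1\le r_k\le e$ together with its boundary refinement in (vi) then records the constraint forced by $e$-regularity (the part $1$ is repeated at most $e-1$ times), and in particular rules out $\nu=(1^e)$.

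The heart of the matter is step (b). Given an $e$-regular $\mu$ with first $e$-rim data $(a_2,r_2)$ (taking $\mu=\varnothing$ for the base case), I would parametrize a candidate first $e$-rim by the sequence of boxes it removes from each row, subject to the rules defining the $e$-rim walk: each segment has exactly $e$ boxes except possibly the last, each new segment restarts at the rightmost rim box one row lower, and the path moves south-west. Reconstructing $\lambda$ from $\mu$ together with such a rim, I would show that $\lambda$ is a genuine $e$-regular partition whose first $e$-rim is the prescribed one, and is uniquely so, if and only if $(a_1,r_1)$ satisfies (i)--(iv) relative to $(a_2,r_2)$. Inequalities (i) and (ii) cut out the admissible ranges for the number of rows and boxes the new rim may add, while the symbol $\epsilon_i$ is pure bookkeeping for whether the final segment of the $i$-th rim is full (length exactly $e$, forcing the walk to turn into a fresh row) or partial (ending mid-row); this single bit is exactly what shifts the box-count bounds in (ii).

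I expect the genuine obstacle to be the two degenerate equalities (iii) and (iv), where the generic inequalities become tight. When $r_i=r_{i+1}$ the successive rims occupy the same rows, which can happen only if rim $i$ closes off with a full segment, forcing $e\mid a_i$; dually, the maximal drop $r_i-r_{i+1}=e$ can occur only when rim $i$ ends on a partial segment, forcing $e\nmid a_i$. Proving these two implications cleanly requires careful tracking of where each segment begins and ends relative to the rows of $\mu$, and ruling out any off-by-one slippage at the junction between the last full segment and the partial tail; this boundary book-keeping, rather than any single clever idea, is where the real work lies. Once (b) is established with (iii) and (iv) in hand, the induction closes at once: an array satisfies (i)--(vi) if and only if its trailing columns form, by induction, the symbol of an $e$-regular $\mu$ and its first column is an admissible prepend, which holds if and only if the whole array is $G_e(\lambda)$ for an $e$-regular $\lambda\vdash\sum a_i$.
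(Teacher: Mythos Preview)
The paper does not prove this proposition; it is quoted verbatim from Mullineux's original paper \cite[Theorem~3.6]{MullineuxBijectionsofpregular} and used as a black box. So there is no in-paper proof to compare against, and your write-up is really a proposed reconstruction of Mullineux's argument rather than of anything the present authors do.

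That said, your plan is the natural one and matches the shape of Mullineux's original proof: induct on the number $k$ of columns, identify the base case with hooks whose arm is at most $e$ (giving (v) and (vi)), and for the inductive step characterise exactly which first columns $(a_1,r_1)$ can be prepended to the symbol of a given $e$-regular $\mu$ so that the reconstruction yields an $e$-regular $\lambda$ with the prescribed first $e$-rim. Your reading of $\epsilon_i$ as the ``full versus partial final segment'' bit is correct and is precisely what drives (iii) and (iv). The only place your sketch is thin is the one you flag yourself: in step~(b) you assert that (i)--(iv) are \emph{exactly} the constraints making the prepend valid, but you do not actually verify either direction, and in particular you do not check that the reconstructed $\lambda$ is $e$-regular (not merely a partition) nor that its first $e$-rim really is the one you glued on rather than something larger. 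Filling in that two-way verification, with the row-by-row bookkeeping you allude to, is the entire content of the proof; once it is done the induction is immediate, as you say.
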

Thus, fixed points of $m_e$ correspond to Mullineux symbols of the form:
\begin{equation}
\label{eq: MullSymbolforfixedpoint}
G_e(\lambda)=\left(
                 \begin{array}{cccc}
                   a_1 & a_2 & \cdots & a_k \\
                   \frac{a_1+\epsilon_1}{2} &\frac{a_2+\epsilon_2}{2} & \cdots & \frac{a_k+\epsilon_k}{2} \\
                 \end{array}
               \right)
\end{equation} where $a_i$ is even if and only if $e \mid a_i$ if and only if $\epsilon_i=0$.
Now we can apply Proposition \ref{prop:Mull conditions} to arrays of the form \eqref{eq: MullSymbolforfixedpoint}, i.e. fixed points. The conditions on the $r_i$ in Proposition \ref{prop:Mull conditions} can easily be translated to conditions on the $a_i$. Thus we can enumerate fixed points simply by counting the suitable partitions $(a_1, a_2, \ldots, a_k) \vdash n$.

\begin{definition}
\label{def: Me(n)=fixedpointsmullineux}
Define $\mathcal{M}_e(n)$ to be the set of partitions $(a_1, a_2, \ldots, a_k) \vdash n$ satisfying:
\begin{itemize}
                                                                   \item[(i)] $2 \mid a_i \Longleftrightarrow e \mid a_i$
                                                                   \item[(ii)] $0 \leq a_i-a_{i+1} \leq 2e$
                                                                   \item[(iii)] If $a_i=a_{i+1}$ then $a_i$ is even.
                                                                   \item[(iv)] If $a_i-a_{i+1} =2e$ then $a_i$ is odd.
                                                                   \item[(v)]$a_k<2e$.
                                                                 \end{itemize}

\end{definition}
Then we have:
\begin{proposition}\cite[Proposition 1] {AndrewsOlssonPartitionIdentities}
\label{prop: AO conditions on mullineux fixed}
Let $p$ be an odd prime. The number of partitions $\lambda \vdash n$ fixed by the Mullineux map is equal to the cardinality of $\mathcal{M}_p(n)$.

\end{proposition}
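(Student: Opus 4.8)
The plan is to exploit the observation made around \eqref{eq: MullSymbolforfixedpoint} that a fixed point $\lambda$ of $m_e$ is completely determined by the top row $(a_1,\ldots,a_k)$ of its Mullineux symbol. Indeed, by Proposition \ref{prop:MullMap} the bottom row of $G_e(m_e\lambda)$ is $(s_1,\ldots,s_k)$ with $s_i=a_i-r_i+\epsilon_i$, so $m_e(\lambda)=\lambda$ forces $r_i=(a_i+\epsilon_i)/2$; conversely, any array of the form \eqref{eq: MullSymbolforfixedpoint} that is a genuine Mullineux symbol is automatically a fixed point, since $G_e$ is injective. Because the $r_i$ are recovered from the $a_i$, the assignment $\lambda\mapsto(a_1,\ldots,a_k)$ is a bijection from the fixed points of $m_e$ onto the set of top rows for which \eqref{eq: MullSymbolforfixedpoint} satisfies the list (i)--(vi) of Proposition \ref{prop:Mull conditions}. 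It therefore suffices to show that this set of admissible top rows is exactly $\mathcal{M}_p(n)$.

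The first point is integrality: $r_i=(a_i+\epsilon_i)/2$ lies in $\mathbb{Z}$ precisely when $a_i\equiv\epsilon_i\pmod 2$. Since $\epsilon_i=0$ iff $e\mid a_i$, this is exactly the requirement that $a_i$ be even iff $e\mid a_i$, which is condition (i) of Definition \ref{def: Me(n)=fixedpointsmullineux}. (Here the hypothesis $e=p$ odd is used, so that $e\mid a_i$ does not already force $a_i$ even.) With this in hand I would substitute $r_i=(a_i+\epsilon_i)/2$ into conditions (i)--(iv) of Proposition \ref{prop:Mull conditions} and simplify, writing $\delta=a_i-a_{i+1}$, so that $r_i-r_{i+1}=\tfrac12(\delta+\epsilon_i-\epsilon_{i+1})$. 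A short computation turns Proposition \ref{prop:Mull conditions}(i),(ii) into $0\le\delta+\epsilon_i-\epsilon_{i+1}\le 2e$ together with $\epsilon_i+\epsilon_{i+1}\le\delta\le\epsilon_i+\epsilon_{i+1}+2e$.

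The core of the argument is then a four-way case analysis on the parities $(\epsilon_i,\epsilon_{i+1})$. In each case the combined inequalities, sharpened by the implications (iii) (``$r_i=r_{i+1}\Rightarrow e\mid a_i$'') and (iv) (``$r_i-r_{i+1}=e\Rightarrow e\nmid a_i$''), collapse to: $0\le\delta\le 2e-2$ when $a_i,a_{i+1}$ are both even; $2\le\delta\le 2e$ when both are odd; and $1\le\delta\le 2e-1$ in the two mixed cases. One then checks directly, parity by parity, that these four case-by-case ranges are exactly the values of $\delta$ permitted by conditions (ii), (iii), (iv) of Definition \ref{def: Me(n)=fixedpointsmullineux}, namely the global bound $0\le\delta\le 2e$ with the endpoint $\delta=0$ allowed only when $a_i$ is even and the endpoint $\delta=2e$ allowed only when $a_i$ is odd. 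Finally, substituting $r_k=(a_k+\epsilon_k)/2$ into conditions (v),(vi) of Proposition \ref{prop:Mull conditions} gives $a_k-r_k=(a_k-\epsilon_k)/2$, and both conditions reduce, using $a_k\ge 1$, to the single inequality $a_k<2e$, which is condition (v) of Definition \ref{def: Me(n)=fixedpointsmullineux}.

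The main obstacle is the bookkeeping in the case analysis, specifically matching the excluded endpoints. One must verify that $\delta=2e$ is ruled out exactly when $a_i$ is even (this is where Proposition \ref{prop:Mull conditions}(iv) enters, since $\delta=2e$ forces $r_i-r_{i+1}=e$) and that $\delta=0$ is ruled out exactly when $a_i$ is odd (via Proposition \ref{prop:Mull conditions}(iii)), so that the four ranges knit together into conditions (ii)--(iv) of Definition \ref{def: Me(n)=fixedpointsmullineux} with neither gap nor overlap. Once the equivalence of the two lists of conditions is established pair-by-pair, the bijection $\lambda\mapsto(a_1,\ldots,a_k)$ yields the claimed equality of cardinalities.
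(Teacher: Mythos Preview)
Your proposal is correct and follows precisely the approach indicated in the paper: substitute $r_i=(a_i+\epsilon_i)/2$ into the conditions of Proposition~\ref{prop:Mull conditions} and translate them into conditions on the $a_i$ alone, which the paper (following \cite{AndrewsOlssonPartitionIdentities}) asserts ``can easily'' be done without writing out the details. Your case analysis on $(\epsilon_i,\epsilon_{i+1})$ and the endpoint bookkeeping for $\delta=0$ and $\delta=2e$ are exactly the verification that this sketch demands.
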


%\begin{remark}
  The proof of Proposition \ref{prop: AO conditions on mullineux fixed} goes through for $e$ arbitrary, so to enumerate Mullineux fixed points we need to understand the set $\mathcal{M}_e(n)$.
The criteria defining $\mathcal{M}_e(n)$ are not easily translated into a generating function.  The main theorem (Theorem 2) of \cite{AndrewsOlssonPartitionIdentities} gives a remarkable enumeration of partitions with difference conditions like this. Observe for $p$ odd that $\mathcal{M}_p(n)$ is a set of type $P_2(A;N,n)$ from that paper, where $N=2p$ and $A=\{1,3,5, \ldots p-2,p+2,\ldots 2p-1\}$.  The paper gives a bijection with a set $P_1(A; 2p, n)$, which in this case is just partitions with distinct odd parts not divisible by $p$, giving Theorem \ref{thm: Olsonfixedpoint}.
%\end{remark}

\subsection{$e$-cores and $e$-weights}
\label{subsec:ecoreseweights}
A \emph{rim e-hook} of $\lambda$ is $e$ consecutive boxes in the rim which, when removed, leave a Young diagram of a partition.  Given a partition $\lambda \vdash n$, there is a unique partition $\lambda_{(e)}$, called the $e$-core of $\lambda$, and obtained by removing rim $e$-hooks from $\lambda$ until none remain. The number of such hooks removed is called the $e$-weight $w$ of $\lambda$, so $\lambda_{(e)} \vdash n-ew$. This is all described in \cite{JamesKerberbook}. In \cite[Bijection 2]{GarvanKimStantonCranksandtcores} there is a bijection between $e$-core partitions and vectors $\vec{n}=(n_0,n_1, \ldots, n_{e-1}) \in \mathbb{Z}^e$ where $\sum_{i=0}^{e-1}n_i=0$. Given the vector $\vec{n}$, the corresponding $e$-core is a partition of
\begin{equation}\label{eq: weightofpcorefornvector}
  \frac{e}{2}\sum_{i=0}^{e-1}n_i^2 + \sum_{i=0}^{e-1}in_i.
\end{equation}
The $e$-weight can be obtained  by subtracting \eqref{eq: weightofpcorefornvector} from $\sum_{j}\lambda_j$ and dividing by $e$.

Bessenrodt and Olsson worked out how to determine $\vec{n}$ directly from the Mullineux symbol of a Mullineux fixed point (they state it for $p$ an odd prime but the proof works for any $e$).

\begin{proposition}\cite[p. 235]{BessendrodtOlssonOnResidueSymbols}
  \label{prop: BOnvectorfromMull}
  Suppose $\lambda \vdash n$ with $m_e(\lambda)=\lambda$, $e$-core $\lambda_{(e)}$ and $G_e(\lambda)$ is as in \eqref{eq: MullSymbolforfixedpoint}. Then the vector $\vec{n}$ corresponding to $\lambda_{(e)}$ is:
  \begin{equation}\label{eq:njfromMullSym}
   n_j= \#\{i \mid \frac{a_i-\epsilon_i}{2} \equiv j \mod e\} - \#\{i \mid \frac{-a_i-\epsilon_i}{2} \equiv j \mod e\}.
  \end{equation}
\end{proposition}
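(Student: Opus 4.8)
The plan is to reduce everything to the Garvan--Kim--Stanton (GKS) description, under which the component $n_j$ is, up to a uniform normalization, the number of first-column hook lengths (beta-numbers) of $\lambda$ congruent to $j \bmod e$. Since removing a rim $e$-hook slides a bead down one position on its runner and hence preserves residues mod $e$, this residue count is an $e$-core invariant, so it suffices to compute it for $\lambda$ itself and then read off $\vec{n}$ via the GKS bijection \cite[Bijection 2]{GarvanKimStantonCranksandtcores}. Concretely, I would show that the balanced count forces
\[
n_j = \#\{i : \overline{a_i - r_i} = j\} - \#\{i : \overline{-r_i} = j\},
\]
where the bar denotes reduction mod $e$; the two differenced terms are exactly the top and bottom rows of the Bessenrodt--Olsson residue symbol attached to the Mullineux symbol. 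Because both rows have the same length $k$, the right-hand side automatically sums to $0$ over $j$, matching the GKS constraint $\sum_j n_j = 0$ and absorbing the normalization cleanly.

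The substantive step is establishing this residue-symbol identity for general $\lambda$ and general $e$, i.e.\ that the Mullineux data $(a_i,r_i)$ records the beta-set residues through $\overline{a_i - r_i}$ and $\overline{-r_i}$. I would do this by induction on the number of columns $k$ (equivalently, on $n$), peeling off one $e$-rim at a time and tracking its effect on the abacus: removing the $i$-th $e$-rim, which has $a_i$ boxes in $r_i$ rows, alters the top and bottom runner occupancies in a way controlled precisely by $a_i - r_i$ and $r_i$ modulo $e$, with the correction term $\epsilon_i$ recording whether $e \mid a_i$. The base case is a single column, where the formula is checked directly against the definition of the $e$-rim. Throughout, primality of $e$ is never used---GKS, the abacus bookkeeping, and the residue symbol are all purely combinatorial in $e$---so the Bessenrodt--Olsson argument transfers verbatim from $p$ to arbitrary $e$.

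With the identity in hand, the proposition is immediate: for a Mullineux fixed point the symbol \eqref{eq: MullSymbolforfixedpoint} has $r_i = (a_i+\epsilon_i)/2$, whence $a_i - r_i = (a_i-\epsilon_i)/2$ and $-r_i = (-a_i-\epsilon_i)/2$. Substituting these into the displayed formula for $n_j$ yields exactly \eqref{eq:njfromMullSym}. I expect the main obstacle to be the inductive bookkeeping in the residue-symbol identity---correctly matching the abacus bead movements induced by an entire $e$-rim strip (as opposed to a single rim $e$-hook, which fixes residues) to the quantities $\overline{a_i - r_i}$ and $\overline{-r_i}$, and verifying the $\epsilon_i$ correction---rather than anything specific to the fixed-point hypothesis or to the value of $e$.
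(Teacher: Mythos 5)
Your proposal is correct and follows essentially the same route as the source the paper relies on: the paper gives no proof of this proposition beyond citing Bessenrodt--Olsson and observing that primality of $e$ is never used, and their argument is exactly your reduction to the residue-symbol identity $n_j=\#\{i:\overline{a_i-r_i}=j\}-\#\{i:\overline{-r_i}=j\}$ via the abacus/GKS bijection, followed by the substitution $r_i=(a_i+\epsilon_i)/2$ for a fixed point. The one point to watch in writing up the inductive step is that an $e$-rim is generally not a union of rim $e$-hooks (its last segment may be short and the strip is removed all at once), so the bead bookkeeping must be done for the whole strip as you anticipate, and note that removing a rim $e$-hook moves a bead \emph{up} its runner (decreasing a beta-number by $e$), not down.
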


\begin{remark}
  \label{rmk:aidivbyedonocontribute}
  It is clear from \eqref{eq:njfromMullSym} that $\vec{n}$, and thus $\lambda_{(e)}$, depends only on the set of nonzero residues mod $e$ of the $a_i$. The $a_i$ which are divisible by $e$ (and hence even with $\epsilon_i=0$), do not contribute to $\vec{n}.$
\end{remark}

\section{Main Results}

 In Table \ref{tab:e=4fixedpointsbyweight} we list the number of fixed points under the Mullineux map when $e=4$ for $0 \leq n \leq 20$ and $e$-weights $0 \leq w \leq 5$. Weight zero partitions are $e$-cores and $m_e$ acts as conjugation on $e$-cores, so the first column of this table enumerates self-conjugate 4-core partitions, which is sequence A053692 in the Online Encyclopedia of Integer Sequence (OEIS) \cite{oeis}. Each successive column is shifted down by $e=4$ and multiplied by the corresponding entry in the sequence $\{1,1,3,4,9,12,23\ldots \}$, which is A002513 in the OEIS, counting partitions of $n$ with even parts of two colors, also known as ``cubic partitions". Our main results generalize Theorems \ref{thm: Olsonfixedpoint} and \ref{thm: BessOlssonFixedpointsbyweight} to arbitrary $e$ and explain the structure of this table.

\begin{table}[h]
\centering
\caption{Mullineux fixed points for $e=4$ by weight}

\begin{tabular}{|c|c|c|c|c|c|c|}
\hline
$n$ & $w=0$ & $w=1$ & $w=2$ & $w=3$ & $w=4$ & $w=5$ \\
\hline

0 & 1 &  &  &  &  &  \\
1 & 1 &  &  &  &  &  \\
2 & 0 &  &  &  &  &  \\
3 & 1 &  &  &  &  &  \\
4 & 1 & 1 &  &  &  &  \\
5 & 1 & 1 &  &  &  &  \\
6 & 1 & 0 &  &  &  &  \\
7 & 1 & 1 &  &  &  &  \\
8 & 0 & 1 & 3 &  &  &  \\
9 & 0 & 1 & 3 &  &  &  \\
10 & 2 & 1 & 0 &  &  &  \\
11 & 0 & 1 & 3 &  &  &  \\
12 & 1 & 0 & 3 & 4 &  &  \\
13 & 1 & 0 & 3 & 4 & & \\
14 & 1 & 2 & 3 & 0 &  &  \\
15 & 2 & 0 & 3 & 4 &  &  \\
16 & 0 & 1 & 0 & 4 & 9 &  \\
17 & 0 & 1 & 0 & 4 & 9 &  \\
18 & 1 & 1 & 6 & 4 & 0 &  \\
19 & 1 & 2 & 0 & 4 & 9 &  \\
20 & 0 & 0 & 3 & 0 &  9 &12\\
\hline
\end{tabular}
\label{tab:e=4fixedpointsbyweight}
\end{table}

We fix some standard generating function notation. Define the $q$-Pochhammer symbol:
$$(a;q)_\infty := \prod_{k=0}^\infty(1-aq^k)$$ and the Ramanujan $\chi$ function:
$$\chi(q):=(-q,q^2)_\infty=(1+q)(1+q^3)(1+q^5) \cdots.$$
Observe that $\chi(q)$ is the generating function counting partitions of $n$ into distinct odd parts.\\

Let $mf_e(n)$ be the number of $e$-regular partitions of $n$ fixed by the Mullineux map and let $$MF_e(q):=\sum_{n=0}^{\infty}mf_e(n)q^n$$ be the corresponding generating function. Our first result determines this generating function:

\begin{theorem}
\label{thm: main result}
\begin{itemize}
  \item[(a)]\cite[for $e$ prime]{OlssonTheNumberofModularCharacters} Let $e$ be odd. Then $mf_e(n)$ is the number of partitions of $n$ into distinct odd parts not divisible by $e$. Thus:

  \begin{equation}\label{eq: e odd generating function}
    MF_e(q)=\frac{\chi(q)}{\chi(q^e)}= \frac{(1+q)(1+q^3)(1+q^5) \cdots}{(1+q^e)(1+q^{3e})(1+q^{5e})\cdots}=\prod_{\substack{k \text{ odd}\\ e \nmid k}}(1+q^k).
  \end{equation}
  \item[(b)] Let $e$ be even. Then $mf_e(n)$ is the number of partitions of $n$ into  parts which are either odd or are odd multiples of $e$, and such that the odd parts are all distinct. Thus:

  \begin{equation}\label{eq: e even generating function}
    MF_e(q)=\frac{\chi(q)}{\chi(-q^e)}= \frac{(1+q)(1+q^3)(1+q^5) \cdots}{(1-q^e)(1-q^{3e})(1-q^{5e})\cdots}.
  \end{equation}
\end{itemize}

\end{theorem}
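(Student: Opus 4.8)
The plan is to invoke Proposition~\ref{prop: AO conditions on mullineux fixed}, which holds for every $e$, so that $mf_e(n)=|\mathcal{M}_e(n)|$ and the entire problem reduces to computing $\sum_n |\mathcal{M}_e(n)|\,q^n$ and recognizing it as the stated product. Everything hinges on condition (i) of Definition~\ref{def: Me(n)=fixedpointsmullineux}: an odd multiple of $e$ is odd when $e$ is odd, so (i) \emph{forbids} it as a part, whereas it is even when $e$ is even, so (i) \emph{allows} it and, by (iii), allows it to repeat. This single dichotomy is exactly what produces the two different denominators $\chi(q^e)$ and $\chi(-q^e)$.

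Part (a) is essentially the observation already recorded before Subsection~\ref{subsec:ecoreseweights}, now for arbitrary odd $e$. For $e$ odd the admissible part residues modulo $2e$ are the odd ones other than $e$, together with $0$; the difference conditions (ii)--(v) make $\mathcal{M}_e(n)$ a set of Andrews--Olsson type $P_2(A;2e,n)$ with $A=\{1,3,\dots,e-2,e+2,\dots,2e-1\}$ (the residue $0$ being permitted among the parts, as always on the $P_2$ side). Since \cite[Theorem~2]{AndrewsOlssonPartitionIdentities} is proved for an arbitrary $A\subseteq\{1,\dots,2e-1\}$ and never uses primality, it gives a bijection onto $P_1(A;2e,n)$, the partitions into distinct parts congruent modulo $2e$ to an element of $A$, i.e.\ into distinct odd parts not divisible by $e$. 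Summing yields $MF_e(q)=\chi(q)/\chi(q^e)$, which is \eqref{eq: e odd generating function}.

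Part (b) is where new work is required. When $e$ is even the odd multiples of $e$ reappear as repeatable even parts, so the admissible nonzero residues modulo $2e$ are now \emph{all} of the odd ones together with $e$, and the enumeration must record distinct parts $\equiv 0\pmod{2e}$ as well. Thus the relevant residue set is $A=\{1,3,\dots,2e-1\}\cup\{0,e\}$, and the presence of $0\in A$ places this squarely outside the hypothesis $A\subseteq\{1,\dots,N-1\}$ of \cite[Theorem~2]{AndrewsOlssonPartitionIdentities}. I would first pin down the target: by Euler's identity $\prod_{m\ge 1}(1+q^{em})=\prod_{k\ \mathrm{odd}}(1-q^{ke})^{-1}=1/\chi(-q^e)$, so the asserted generating function $\chi(q)/\chi(-q^e)$ counts partitions into distinct parts each of which is odd or a multiple of $e$. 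It therefore suffices to biject $\mathcal{M}_e(n)$ with such partitions.

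Constructing that bijection is the main obstacle. The difficulty is that the repeatable even parts (the multiples of $e$, occupying residues $0$ and $e$) are coupled to the distinct odd parts through the gap conditions (ii) and (iv), so they cannot simply be split off and dispatched by Euler's map in isolation. I expect to extend the Andrews--Olsson construction to admit the residue $0$, carrying it out in two coordinated passes: first fold the repeated multiples of $e$ into \emph{distinct} multiples of $e$ by a Glaisher/Sylvester-type merging that respects the difference conditions, and then run the classical Andrews--Olsson bijection on the remaining odd structure. The key verifications are that condition (v), $a_k<2e$, is precisely the boundary hypothesis keeping the merging invertible at the bottom of the partition, and that the residue $0$ is now transported to a genuine multiple of $e$ rather than being redistributed as it is in the classical ($A\subseteq\{1,\dots,N-1\}$) case. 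Granting this, the image is enumerated by $\chi(q)\prod_{m\ge1}(1+q^{em})=\chi(q)/\chi(-q^e)$, giving \eqref{eq: e even generating function}.
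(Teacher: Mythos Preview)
Your treatment of part~(a) matches the paper's: for odd $e$ one simply observes that the Andrews--Olsson bijection between $P_2(A;2e,n)$ and $P_1(A;2e,n)$ with $A=\{1,3,\dots,e-2,e+2,\dots,2e-1\}$ never uses primality, so the original argument goes through verbatim.

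For part~(b) there is a genuine gap, and also a misdiagnosis of the obstruction. The difficulty when $e$ is even is \emph{not} that the residue $0$ enters $A$; in the Andrews--Olsson $P_2$ setup, parts $\equiv 0\pmod{N}$ are always present and repeatable. The new phenomenon is that parts $\equiv e\pmod{2e}$ are now even, hence by condition~(iii) repeatable, so one needs a framework allowing a \emph{nonzero} residue class to repeat. Your proposed two-pass construction (Glaisher merging on multiples of $e$, then classical Andrews--Olsson on the odd parts) is only a sketch: you do not verify that the merging interacts correctly with the gap conditions~(ii) and~(iv), and you explicitly write ``I expect to extend'' and ``Granting this,'' so no bijection is actually produced.

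The paper avoids this by invoking Bessenrodt's generalization \cite{BessenrodtGeneralisationsofAndrewsOlsson} of the Andrews--Olsson theorem, which is tailored precisely to this situation. With $N=2e$, $A'=\{e\}$ (repeatable residues) and $A''=\{1,3,\dots,2e-1\}$ (distinct residues), one checks that $\mathcal{M}_e(n)=P_2(A',A'';N,n)$, and Bessenrodt's bijection lands in $P_1(A',A'';N,n)$: partitions whose parts lie in residues from $A'\cup A''$ with only the $A'$-residues allowed to repeat. That is exactly ``parts odd or an odd multiple of $e$, with the odd parts distinct,'' giving \eqref{eq: e even generating function} immediately. Note also that this target set is the one in the theorem statement; your alternative target (distinct parts which are odd or any multiple of $e$) is equinumerous via Euler, but it is not the set the paper bijects onto, and aiming for it is what led you toward an ad hoc construction rather than the off-the-shelf result.
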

Notice  when $e=2$ that $MF_2(q)$ simplifies to $\prod_{k \text{ odd}}\frac{1}{1-q^k}$, the generating function counting partitions with odd parts, which is known to be the same as that for distinct parts. Here, the Mullineux map is trivial and we obtain $mf_2(n)$ is the number of two-regular partitions, i.e. partitions with distinct parts, as expected.

If we look at the corresponding alternating series, there is a nice common description of the two generating functions:

\begin{corollary}
For $e$ arbitrary:
\begin{equation}MF_e(-q)=\sum_{n=0}^{\infty}(-1)^nmf_e(n)q^n = \prod_{k=1}^{\infty}\frac{1+q^{ek}}{1+q^k}.\end{equation}
\end{corollary}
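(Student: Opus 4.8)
The plan is to reduce both cases of Theorem~\ref{thm: main result} to a single expression after the substitution $q \mapsto -q$, and then recognize that expression as the stated infinite product via Euler's classical identity. The first equality in the statement, $MF_e(-q) = \sum_{n=0}^\infty (-1)^n mf_e(n) q^n$, is nothing but the definition of $MF_e$ evaluated at $-q$, so all the content lies in the product formula on the right.

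First I would record the effect of $q \mapsto -q$ on the Ramanujan function. From $\chi(q) = \prod_{j \geq 0}(1 + q^{2j+1})$ one gets $\chi(-q) = \prod_{j \geq 0}(1 - q^{2j+1}) = \prod_{k \text{ odd}}(1 - q^k)$, and likewise $\chi(-q^e) = \prod_{k \text{ odd}}(1 - q^{ek})$. Next I would substitute into the two formulas of Theorem~\ref{thm: main result}, tracking the parity of $e$ carefully. When $e$ is odd (case (a)) we have $(-q)^e = -q^e$, so $MF_e(-q) = \chi(-q)/\chi\bigl((-q)^e\bigr) = \chi(-q)/\chi(-q^e)$. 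When $e$ is even (case (b)) we have $(-q)^e = q^e$, so $MF_e(-q) = \chi(-q)/\chi\bigl(-(-q)^e\bigr) = \chi(-q)/\chi(-q^e)$. The point, and the small pleasure of the computation, is that the two superficially different generating functions of Theorem~\ref{thm: main result} collapse to the identical expression $\chi(-q)/\chi(-q^e)$ after the sign substitution, which is exactly why a uniform description becomes possible.

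Finally I would rewrite $\chi(-q)/\chi(-q^e)$ as the desired product. Euler's identity (partitions into distinct parts versus odd parts) gives $\prod_{k \geq 1}(1 + q^k) = \prod_{k \text{ odd}}(1 - q^k)^{-1} = \chi(-q)^{-1}$, and the same identity applied with $q$ replaced by $q^e$ gives $\prod_{k \geq 1}(1 + q^{ek}) = \chi(-q^e)^{-1}$. Dividing these,
\[
\prod_{k=1}^{\infty} \frac{1 + q^{ek}}{1 + q^k} = \frac{\chi(-q)}{\chi(-q^e)} = MF_e(-q),
\]
which completes the argument.

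The only step requiring genuine care is the parity bookkeeping for $(-q)^e$ in the two cases; everything else is a direct appeal to Theorem~\ref{thm: main result} together with the standard Euler identity, so I do not anticipate a real obstacle. If anything, the subtlety is purely presentational: one must verify that the odd and even formulas really do coincide after the substitution rather than merely look similar, and the computation above confirms that they do.
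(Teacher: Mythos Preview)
Your proof is correct and follows exactly the same route as the paper: both cases of Theorem~\ref{thm: main result} collapse to $\chi(-q)/\chi(-q^e)$ after $q\mapsto -q$, and this is then rewritten as the stated product. You have simply supplied the details (the parity check on $(-q)^e$ and the appeal to Euler's identity) that the paper leaves implicit in its one-line proof.
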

\begin{proof}
For both the $e$ even and $e$ odd case, $MF_e(-q)=\frac{\chi(-q)}{\chi(-q^e)}$, which simplifies to the expression above.
\end{proof}

For $e=3, 4, 5, 6$ the generating function $MF_e(-q)$ corresponds to the sequences A098884, A261734, A133563 and A261736 respectively in the OEIS \cite{oeis}. Before this paper appeared in preprint form, there was no mention of the Mullineux map in any of these entries!

\subsection{Mullineux fixed points of a given $e$-weight}

We also give a result for all $e$ generalizing Theorem \ref{thm: BessOlssonFixedpointsbyweight}, counting Mullineux fixed points by $e$-weight.

\begin{definition}
  Let $mf_{e,w}(n)$ be the number of $\lambda \vdash n$ with $e$-weight $w$ and $m_e(\lambda)=\lambda$.
\end{definition}
It is well-known that the Mullineux map of an $e$-core partition is the transpose or conjugate of that partition. So determining $mf_{e,0}(n)$ means counting self-conjugate $e$-cores. These were enumerated by Garvan, Kim and Stanton.

\begin{definition}
  Let $sc_e(n)$ ($=mf_{e,0}(n)$) denote the number of self-conjugate $e$-core partitions of $n$ and let $$SC_e(q):=\sum_{n=0}^{\infty} sc_e(n)q^n.$$
\end{definition}

Garvan, Kim and Stanton determined $SC_e(q)$:

\begin{theorem}\cite[7.1a,b]{GarvanKimStantonCranksandtcores}
  \label{thm: GKSselfconjugateecores}
  The generating function $SC_e(q)$ is:

    \[
    SC_e(q)) =
    \begin{cases}
      (-q,q^2)_\infty(q^{2e},q^{2e})_\infty ^{e/2} & \mbox{if } e \mbox{ is even} \\[10pt]
      \frac{(-q,q^2)_\infty(q^{2e},q^{2e})_\infty ^{\frac{e-1}{2}}}{(-q^e,q^{2e})}& \mbox{if } e \mbox{ is odd.}
    \end{cases}
  \]
\end{theorem}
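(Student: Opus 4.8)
The plan is to work entirely on the side of the vectors $\vec n=(n_0,\dots,n_{e-1})\in\mathbb{Z}^e$ with $\sum_i n_i=0$ furnished by the bijection of \cite{GarvanKimStantonCranksandtcores}, using the size formula \eqref{eq: weightofpcorefornvector}. First I would pin down how conjugation of the $e$-core acts on $\vec n$. One checks that the relevant involution is $\vec n\mapsto \vec n'$ with $n'_i=-n_{e-1-i}$: this preserves $\sum_i n_i^2$, and since $\sum_i i\,n'_i=-\sum_i (e-1-i)n_i=\sum_i i\,n_i$ (using $\sum_i n_i=0$), it preserves \eqref{eq: weightofpcorefornvector}, as conjugation must. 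Hence the self-conjugate $e$-cores correspond exactly to the fixed vectors, namely those with $n_i=-n_{e-1-i}$ for all $i$.

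Next I would parametrize these fixed vectors and factor the generating function. The indices split into the pairs $\{i,e-1-i\}$; setting $x_i:=n_i$ for $i<e-1-i$ gives free parameters $x_i\in\mathbb{Z}$, while for $e$ odd the central index $i=(e-1)/2$ is forced to have $n_i=0$. A short computation shows that the pair $\{i,e-1-i\}$ contributes $e\,x_i^2+c_i x_i$ to the exponent in \eqref{eq: weightofpcorefornvector}, where $c_i=2i-(e-1)$. Thus
$$SC_e(q)=\prod_{i}\ \sum_{x\in\mathbb{Z}}q^{e x^2+c_i x},$$
the product running over the $\lfloor e/2\rfloor$ pairs, with $c_i$ running through the negative odd integers $-1,-3,\dots,-(e-1)$ when $e$ is even, and through the negative even integers $-2,-4,\dots,-(e-1)$ when $e$ is odd. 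Each inner sum is a theta function, and the Jacobi triple product evaluates it as
$$\sum_{x\in\mathbb{Z}}q^{e x^2+c x}=(q^{2e};q^{2e})_\infty\,(-q^{e+c};q^{2e})_\infty\,(-q^{e-c};q^{2e})_\infty.$$

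Finally I would collect the factors. Each pair yields one copy of $(q^{2e};q^{2e})_\infty$, producing the predicted power $(q^{2e};q^{2e})_\infty^{e/2}$ (resp. $(q^{2e};q^{2e})_\infty^{(e-1)/2}$). It remains to show the surviving product $\prod_i(-q^{e-|c_i|};q^{2e})_\infty(-q^{e+|c_i|};q^{2e})_\infty$ collapses to $\chi(q)=(-q;q^2)_\infty$ in the even case and to $\chi(q)/(-q^e;q^{2e})_\infty$ in the odd case. The exponents $e\pm|c_i|$ are precisely the odd numbers in $[1,2e-1]$ --- all of them when $e$ is even, but all except $e$ itself when $e$ is odd (there $|c_i|$ is even, so $e$ never appears and must be divided back out). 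Since every positive odd integer is uniquely $j+2ek$ with $j$ odd in $[1,2e-1]$ and $k\ge 0$, the product $\prod_{j\text{ odd},\,1\le j\le 2e-1}(-q^{j};q^{2e})_\infty$ telescopes to $(-q;q^2)_\infty$, which gives both formulas. The main obstacle is the bookkeeping in this last step: correctly identifying the conjugation involution on $\vec n$ and then tracking exactly which odd residues mod $2e$ are swept out, in particular the asymmetry that forces the extra factor $(-q^e;q^{2e})_\infty^{-1}$ when $e$ is odd.
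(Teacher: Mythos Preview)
The paper does not prove this theorem at all: it is quoted verbatim from Garvan--Kim--Stanton \cite{GarvanKimStantonCranksandtcores} and used as a black box in the derivation of the two--variable generating function. So there is no ``paper's own proof'' to compare with.

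That said, your argument is correct and is essentially the proof Garvan--Kim--Stanton give. You have the right involution on $\vec n$ (indeed $n_i\mapsto -n_{e-1-i}$ is the effect of conjugation in their bijection), the pairing of coordinates and the resulting quadratic $ex^2+c_ix$ are computed correctly, the Jacobi triple product is applied correctly, and the final bookkeeping showing the exponents $e\pm|c_i|$ sweep exactly the odd residues modulo $2e$ (all of them for $e$ even, all but $e$ for $e$ odd) is right and yields the claimed telescoping to $(-q;q^2)_\infty$, with the extra denominator $(-q^e;q^{2e})_\infty$ appearing precisely when $e$ is odd. The one point you flag as the ``main obstacle'' --- that $n_i'=-n_{e-1-i}$ really is conjugation and not merely some size--preserving involution --- does need the GKS bijection itself (or an abacus argument) rather than just the size check you give, but this is standard and is in the reference.
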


It turns out that for a self-conjugate $e$-core partition of $n-ew$ , the number of Mullineux fixed point partitions of $n$ with that $e$ core depends only on the weight $w$. These numbers, for $w=0$, $w=1$, etc. also have nice generating functions, again depending on the parity of $e$. The next definition gives the sequences of ``column multipliers" that we observed in Table \ref{tab:e=4fixedpointsbyweight}:

\begin{definition}
\label{def:multipliers}
  Let $e$ be even. Define $f_e(n)$ by:\begin{eqnarray}
      % \nonumber % Remove numbering (before each equation)
      \label{eq:f_edef}
        \sum_{n=0}^{\infty}f_e(n)q^n &:=& \prod_{k=1}^{\infty}\frac{1}{(1-q^{2k})^{e/2}}\frac{1}{(1-q^{2k-1})} \\
         &=& \nonumber \frac{1}{(q^2,q^2)_\infty ^{e/2} (q,q^2)_\infty}.
      \end{eqnarray}
  Let $e$ be odd. Define $g_e(n)$ by:
      \begin{eqnarray}
      % \nonumber % Remove numbering (before each equation)
        \sum_{n=0}^{\infty}g_e(n)q^n &:=& \prod_{k=1}^{\infty}\frac{1}{(1-q^{k})^{(e-1)/2}} \\
         &=& \nonumber \frac{1}{(q,q)_\infty ^{(e-1)/2} }
      \end{eqnarray}
\end{definition}

Notice that both $f_e(n)$ and $g_e(n)$ enumerate certain tuples of partitions with total weights adding up to $n$. With this notation, we can state our theorem enumerating Mullineux fixed points of a given weight. This result for $e$ an odd prime is Theorem 3.5 of \cite{BessendrodtOlssonOnResidueSymbols}.

\begin{theorem}
  \label{thm: Countbyweight}
  \begin{itemize}
    \item[(a)] Suppose $e$ is even. Then:
    $$mf_{e,w}(n)=f_e(w)sc_e(n-ew).$$
    \item[(b)] Suppose $e$ is odd. Then $mf_{e,w}(n)$ is zero unless $w$ is even in which case
    $$mf_{e,w}(n)=g(\frac{w}{2})sc_e(n-ew).$$
  \end{itemize}
\end{theorem}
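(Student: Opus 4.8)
The plan is to prove the statement underlying both parts: for every self-conjugate $e$-core $\mu$, the number of Mullineux fixed points $\lambda \vdash |\mu|+ew$ with $e$-core $\mu$ and $e$-weight $w$ is a constant depending only on $e$ and $w$ (namely $f_e(w)$ for $e$ even and $g_e(w/2)$ for $e$ odd, the latter vanishing for $w$ odd). Granting this \emph{uniformity over cores}, the theorem follows at once by summing over the $sc_e(n-ew)$ self-conjugate $e$-cores of $n-ew$ furnished by Theorem \ref{thm: GKSselfconjugateecores}. Throughout I would work with the set $\mathcal{M}_e(n)$ of admissible first rows $(a_1,\dots,a_k)$ of fixed-point Mullineux symbols (Proposition \ref{prop: AO conditions on mullineux fixed}, valid for all $e$), reading off the core from the vector $\vec n$ via \eqref{eq:njfromMullSym} and the weight from $w=\tfrac1e(\sum_j a_j-|\mu|)$ via \eqref{eq: weightofpcorefornvector}. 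As a consistency check one verifies at the level of total generating functions that $SC_e(q)\cdot F_e(q^e)=\tfrac{(-q,q^2)_\infty}{(q^e,q^{2e})_\infty}=MF_e(q)$, matching Theorem \ref{thm: main result}(b); the content of Theorem \ref{thm: Countbyweight} is the refinement of this identity to individual cores.

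For part (b), $e$ odd, I would observe, as the abstract promises, that the Bessenrodt--Olsson proof of Theorem \ref{thm: BessOlssonFixedpointsbyweight} never uses primality of $p$: it manipulates only the fixed-point symbol \eqref{eq: MullSymbolforfixedpoint}, the residue formula \eqref{eq:njfromMullSym}, and the core-size formula. Transcribing that argument verbatim with $p$ replaced by an arbitrary odd $e$ yields the bijection between fixed points of core $\mu$ and weight $w$ and the $(e-1)/2$-tuples of partitions of total size $w/2$ counted by $g_e(w/2)$, together with the parity obstruction forcing $w$ even.

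The genuinely new work is part (a), $e$ even, where the task is to disentangle the ``core data'' from the ``weight data'' inside $\mathcal{M}_e(n)$. Condition (i) of Definition \ref{def: Me(n)=fixedpointsmullineux} says the even $a_i$ are exactly the multiples of $e$, which by Remark \ref{rmk:aidivbyedonocontribute} do not affect the core, while condition (iii) forces the odd parts to be distinct. A short computation from \eqref{eq:njfromMullSym} shows that an odd part contributes to $\vec n$ only through its residue modulo $2e$: writing $c_j$ for the number of odd parts $a_i$ with $\tfrac{a_i-1}{2}\equiv j \pmod e$, one finds $n_j=c_j-c_{e-1-j}$, so fixing $\mu$ is equivalent to fixing the $e/2$ differences $c_j-c_{e-1-j}$ over the pairs $\{j,\,e-1-j\}$, which are disjoint because $e$ is even. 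The residual freedom then comes from three weight-increasing, core-preserving moves: inserting a part $me$ (weight $+m$), raising an odd part by $2e$ (weight $+2$), and adjoining a minimal ``matched pair'' of odd parts in complementary classes $\{j,e-1-j\}$ (weight $+2$). Heuristically this should reproduce the factorization $\tfrac{1}{(q^2,q^2)_\infty^{e/2}}\cdot\tfrac{1}{(q,q^2)_\infty}$ of Definition \ref{def:multipliers}, with the $e/2$ even-part factors coming from the residue-pairs and the single odd-part factor from the multiples of $e$.

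The main obstacle is precisely that these moves are \emph{not} independent: the difference conditions (ii), (iv), (v) couple the even parts, the levels, and the matched pairs through the requirement that the multiset $\{a_i\}$ sort into a weakly decreasing sequence with all gaps in $[0,2e]$ (a gap of $2e$ forcing an odd larger part, a gap of $0$ forbidden between odd parts). Thus the crux is to show that, once $\vec n$ is fixed, the admissible configurations of weight $w$ are counted by $f_e(w)$ \emph{regardless of $\mu$}. I would establish this by recasting $\mathcal{M}_e(n)$ on a $2e$-runner abacus, under which fixing the core fixes the bead count on each runner and the weight becomes the total bead displacement, so that the gap conditions untangle into unconstrained displacements decoupling across runners. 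The delicate points are that the parity-linked gap conditions (iii)--(iv) must convert the difference-conditioned even-part data into a partition into \emph{odd} parts---exactly the kind of transformation supplied by the Andrews--Olsson bijection (their Theorem 2, already used for Theorem \ref{thm: main result})---and that the number of free runners must come out to $e/2$ even-part partitions plus one odd-part partition, \emph{independently} of the chosen core. This core-independence is where the even case differs essentially from the odd case: for $e$ odd the pairing $\{j,e-1-j\}$ acquires the fixed point $j=(e-1)/2$ and the self-conjugacy symmetry collapses the count to $(e-1)/2$ partitions while forcing $w$ to be even.
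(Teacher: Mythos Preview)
Your treatment of part (b) matches the paper's: the Bessenrodt--Olsson argument for an odd prime transfers verbatim to arbitrary odd $e$, and nothing more is needed there.

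For part (a) your strategy is the right one---fix a self-conjugate $e$-core $\mu$ and show the number of fixed points of weight $w$ above it equals $f_e(w)$---and your residue analysis of $\vec n$ is correct. But the proposal stops short precisely where the work is. You name the obstacle yourself: the three ``moves'' you list (insert a multiple of $e$, raise an odd part by $2e$, adjoin a matched pair) are entangled by the gap conditions (ii)--(v) of Definition~\ref{def: Me(n)=fixedpointsmullineux}, and you do not actually carry out the promised abacus argument that would decouple them. Saying the conditions should ``untangle into unconstrained displacements'' is a hope, not a proof, and working directly inside $\mathcal{M}_e(n)$ with its difference conditions is exactly what makes this hard.

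The paper avoids this entanglement by a different route. Rather than attacking $\mathcal{M}_e(n)$ head-on, it first applies Bessenrodt's generalization of the Andrews--Olsson bijection to the \emph{entire} sequence $(a_1,\dots,a_k)$, not just to the even parts. This sends $\mathcal{M}_e(n)$ to pairs $(\tau,e\gamma)$ where $\tau$ has distinct odd parts and $\gamma$ has odd parts, with \emph{no} gap conditions remaining; the key Lemma~\ref{lem:Bessbijection} is that this bijection preserves the multiset of nonzero residues modulo $2e$, hence preserves $\vec n$ and the $e$-core. Once the difference conditions are gone, the paper invokes the $2e$-bar core and $2e$-bar quotient of Morris--Yaseen for $\tau$ (a theory you do not mention): since $\tau$ has only odd parts and $e$ is even, runners $0$ and $e$ are empty and the bar-quotient is a clean $e/2$-tuple of partitions $(\rho^1,\dots,\rho^{e/2})$ with $|\tau|=|\tilde\tau_{(2e)}|+2e\sum_i|\rho^i|$. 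One then checks $|\tilde\tau_{(2e)}|=|\lambda_{(e)}|$, so $w=2\sum_i|\rho^i|+|\gamma|$, giving the bijection with the tuples counted by $\kappa(e/2,w)=f_e(w)$. The ``odd-part factor'' $1/(q,q^2)_\infty$ you were looking for is exactly the free partition $\gamma$, and the $e/2$ factors $1/(q^2,q^2)_\infty$ are the bar-quotient partitions; the core-independence you flagged as delicate is automatic from Lemma~\ref{lem:Countpartitionswithtbarcores}. In short, the missing idea is: kill the gap conditions \emph{first} with Bessenrodt's bijection, and only then read off core and weight via bar-partition combinatorics.
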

 Theorem \ref{thm: Countbyweight}  lets us determine a single two-variable generating function that keeps track of Mullineux fixed points by $n$ and the weight $w$. We will need a small reindexing given by:

 \begin{lemma}
   \label{lem:reindexing}
   Suppose $A(x)=\sum_{i=0}^{\infty}a_ix^i$ and $B(x)=\sum_{j=0}^{\infty}b_jq^j.$ Then:

 $$A(q^ex)B(q)=\sum_{n,w}a_wb_{n-ew}x^wq^n.$$
 $$A(q^{2e}x^2)B(q)=\sum_{n}\sum_{w \textrm{even}}a_{\frac{w}{2}}b_{n-ew}x^wq^n.$$

\end{lemma}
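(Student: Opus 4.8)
The plan is to work entirely in the ring of formal power series $\mathbb{Z}[[x,q]]$ and prove both identities by direct substitution followed by a reindexing of the summation. There is really no analytic content and no genuine obstacle here; the only thing meriting attention is checking that the index ranges match up, and even that is routine bookkeeping. (Here I read $B$ as the series $B(q)=\sum_{j=0}^{\infty}b_jq^j$, matching the way it appears on the right-hand sides of the statement.)

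For the first identity I would begin by substituting $q^e x$ for the variable of $A$, giving
$$A(q^e x) = \sum_{i=0}^{\infty} a_i (q^e x)^i = \sum_{i=0}^{\infty} a_i\, q^{ei} x^i.$$
Multiplying by $B(q)$ and collecting terms yields the double sum
$$A(q^e x)B(q) = \sum_{i=0}^{\infty}\sum_{j=0}^{\infty} a_i b_j\, x^i q^{ei+j}.$$
Now set $w = i$, so that the exponent of $x$ is $w$, and set $n = ei + j = ew + j$, the total exponent of $q$, so that $j = n - ew$. As $i,j$ range independently over the nonnegative integers, the pair $(w,n)$ ranges over all $w \geq 0$ and all $n \geq ew$; for these pairs $n - ew \geq 0$ and $b_{n-ew}$ is defined, while adopting the convention $b_{n-ew}=0$ for $n < ew$ lets us sum freely over all $n,w \geq 0$. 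This produces exactly $\sum_{n,w} a_w b_{n-ew}\, x^w q^n$, as claimed.

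The second identity is proved in the same manner, replacing the substitution $x \mapsto q^e x$ by $x \mapsto q^{2e}x^2$. One obtains
$$A(q^{2e}x^2)B(q) = \sum_{i=0}^{\infty}\sum_{j=0}^{\infty} a_i b_j\, x^{2i} q^{2ei+j},$$
and now the exponent of $x$ is always even. Writing $w = 2i$ (so $i = w/2$ and $w$ runs over the even nonnegative integers) and $n = 2ei + j = ew + j$, whence $j = n - ew$, the double sum becomes $\sum_{n}\sum_{w\text{ even}} a_{w/2} b_{n-ew}\, x^w q^n$, which is the second claimed identity. Thus the entire argument reduces to substitution and relabeling of indices, and the main ``obstacle'' is simply ensuring the translation $(i,j)\mapsto(w,n)$ is a bijection onto the intended index set, which it plainly is.
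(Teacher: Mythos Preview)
Your proof is correct and follows exactly the same approach as the paper: a direct substitution into $A$ followed by the reindexing $n=j+ew$ (and, for the second identity, $w=2i$). The paper's proof is the one-line version of what you wrote out in detail.
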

\begin{proof}
  Simply reindex the sum with a substitution $n=j+ew$ and (for the second term), $t=w/2$.
\end{proof}

Now define a two-variable generating function:
\begin{definition}
  \label{def:2variableGF}
  Let $$MF_e(x,q):=\sum_{n=0}^{\infty}\sum_{w=0}^{\infty}mf_{e,w}(n)x^wq^n.$$
\end{definition}

Lemma \ref{lem:reindexing} applied to the expressions in Theorem \ref{thm: Countbyweight}, and using the generating functions from Definition \ref{def:multipliers} and Theorem \ref{thm: GKSselfconjugateecores}, proves the following:

\begin{theorem}
\label{thm:Jointgenerating function}
 The generating function $MF_e(x,q)$ is:

    \[
    MF_e(x,q) =
    \begin{cases}
     \frac{(-q,q^2)_\infty(q^{2e},q^{2e})_\infty^{e/2} }{(q^{2e}x^2,q^{2e}x^2)_\infty ^{e/2} (q^ex,q^{2e}x^2)_\infty }& \mbox{if } e \mbox{ is even} \\[10pt]

  \frac{(-q,q^2)_\infty(q^{2e},q^{2e})_\infty ^{e-1/2}}{(q^{2e}x^2,q^{2e}x^2)_\infty ^{(e-1)/2}(-q^e,q^{2e})}& \mbox{if } e \mbox{ is odd}

    \end{cases}
  \]

\end{theorem}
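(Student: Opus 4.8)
The plan is to read off $MF_e(x,q)$ directly by feeding the two factorizations of Theorem \ref{thm: Countbyweight} into the reindexing Lemma \ref{lem:reindexing}, and then substituting the closed-form generating functions supplied by Definition \ref{def:multipliers} and Theorem \ref{thm: GKSselfconjugateecores}. In each parity case the double sum $\sum_{n,w} mf_{e,w}(n) x^w q^n$ separates into a product of a single-variable series in the weight variable and the self-conjugate $e$-core series $SC_e(q)$, after which the whole calculation reduces to bookkeeping inside $q$-Pochhammer symbols.

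For $e$ even, I would start from $mf_{e,w}(n) = f_e(w)\, sc_e(n-ew)$ and take $A(x) = \sum_w f_e(w) x^w$ together with $B(q) = \sum_n sc_e(n) q^n = SC_e(q)$. The first identity of Lemma \ref{lem:reindexing} then gives $MF_e(x,q) = A(q^e x)\, SC_e(q)$. Substituting the variable $q \mapsto q^e x$ into the product \eqref{eq:f_edef} turns $(q^2,q^2)_\infty$ into $(q^{2e} x^2, q^{2e} x^2)_\infty$ and $(q,q^2)_\infty$ into $(q^e x, q^{2e} x^2)_\infty$, so that $A(q^e x) = \frac{1}{(q^{2e} x^2, q^{2e} x^2)_\infty^{e/2}\,(q^e x, q^{2e} x^2)_\infty}$. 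Multiplying by the even-case value $SC_e(q) = (-q,q^2)_\infty (q^{2e}, q^{2e})_\infty^{e/2}$ from Theorem \ref{thm: GKSselfconjugateecores} yields the stated formula.

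For $e$ odd, I would use $mf_{e,w}(n) = g_e(w/2)\, sc_e(n-ew)$ for $w$ even and zero otherwise, and set $A(x) = \sum_i g_e(i) x^i$, $B(q) = SC_e(q)$. The second identity of Lemma \ref{lem:reindexing} is tailored exactly to the ``$w$ even, index $w/2$'' pattern and gives $MF_e(x,q) = A(q^{2e} x^2)\, SC_e(q)$. Here the substitution $q \mapsto q^{2e} x^2$ sends $(q,q)_\infty$ to $(q^{2e} x^2, q^{2e} x^2)_\infty$, so $A(q^{2e} x^2) = \frac{1}{(q^{2e} x^2, q^{2e} x^2)_\infty^{(e-1)/2}}$, and multiplying by the odd-case $SC_e(q) = \frac{(-q,q^2)_\infty (q^{2e},q^{2e})_\infty^{(e-1)/2}}{(-q^e,q^{2e})}$ produces the second displayed formula.

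The only real care needed is to track each $q$-Pochhammer factor correctly through the substitution $q \mapsto q^e x$ or $q \mapsto q^{2e} x^2$; no genuine obstacle arises, since Lemma \ref{lem:reindexing} has already packaged the combinatorially delicate reindexing (including the halving in the odd case). One should verify that the exponents $e/2$ and $(e-1)/2$ coming from $A$ and from the $e$-core formula agree in each parity, after which the two cases assemble into the claimed piecewise expression for $MF_e(x,q)$.
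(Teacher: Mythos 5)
Your proposal is correct and matches the paper's own argument exactly: the paper proves Theorem \ref{thm:Jointgenerating function} precisely by applying Lemma \ref{lem:reindexing} to the two cases of Theorem \ref{thm: Countbyweight} and substituting the product formulas from Definition \ref{def:multipliers} and Theorem \ref{thm: GKSselfconjugateecores}, which is what you do, including the correct variable substitutions $x \mapsto q^e x$ and $x \mapsto q^{2e}x^2$ in the respective parity cases.
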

\section{Proof of the main results}
Theorems \ref{thm: Olsonfixedpoint} and \ref{thm: BessOlssonFixedpointsbyweight} are stated for $p$ an odd prime, because that is where the original representation theory motivation comes from. However there is nothing in either proof that makes use of primality, so the $e$ odd case of Theorems \ref{thm: main result} and \ref{thm: Countbyweight} should be attributed  to those authors.  Thus we will consider only the case where $e$ is even.

When $e$ is odd notice that Definition \ref{def: Me(n)=fixedpointsmullineux}(i) gives that if $e \mid a_i$ then actually $N=2e \mid a_i$. This is not the case for $e$-even, and this means that $\mathcal{M}_e(n)$ in this case is not dealt with by the bijections in \cite{AndrewsOlssonPartitionIdentities}. However in \cite{BessenrodtGeneralisationsofAndrewsOlsson}, Bessenrodt gave a vast generalization that includes this case. So in her notation let $N=2e$ for $e$ even. Choose the sets $A'=\{e\}$ and $A''=\{1,3,5,\ldots, 2e-1\}$ with $A=A'\cup A''.$ One can check that the conditions for $\mathcal{M}_e(n)$ are precisely those defining Bessenrodt's set $P_2(A',A''; N,n).$ Bessenrodt gives a bijection with a set $P_1(A',A''; N,n)$, which are all partitions with parts congruent mod $2e$ to elements of $A$ and repeating parts must be congruent to elements of $A'$. This is precisely the set of partitions with all parts odd or odd multiples of $e$ with the odd parts distinct. This proves Theorem \ref{thm: main result}(b).

Finally, to prove Theorem \ref{thm: Countbyweight}(a), we need to count Mullineux fixed points in a given block. We will follow the idea of \cite[Theorem 3.5]{BessendrodtOlssonOnResidueSymbols}, although the proof is slightly more complicated since $e$ is even.

\begin{definition}
\label{def: kappars}
  Define $\kappa(r,s)$ to be the number of tuples $(\gamma^1, \gamma^2, \ldots, \gamma^r; \tau)$ where the $\gamma^i$ and $\tau$ are partitions, $\tau$ has odd parts, and  $|\tau|+2\sum_{i}|\gamma^i|=s.$
\end{definition}

\begin{theorem}
  \label{thm: kappa}
  Let $e$ be even, let $\mu$ be a self-conjugate $e$-core with $w=\frac{n-|\mu|}{e}$. Then:

  $$\kappa\left(\frac{e}{2},w\right)=\#\{\lambda \vdash n \mid \lambda=m_e(\lambda), \lambda_{(e)}=\mu\}.$$
\end{theorem}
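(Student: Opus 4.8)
The plan is to set up a bijection between the tuples counted by $\kappa(e/2, w)$ and the Mullineux fixed points $\lambda \vdash n$ with $e$-core $\mu$. By Proposition \ref{prop: AO conditions on mullineux fixed} (extended to arbitrary $e$), such fixed points are in bijection with partitions $(a_1, \ldots, a_k) \in \mathcal{M}_e(n)$, so I would first translate everything into the language of the $a_i$. The key structural observation, recorded in Remark \ref{rmk:aidivbyedonocontribute}, is that the $e$-core $\lambda_{(e)}$ depends only on the nonzero residues mod $e$ of the $a_i$, and that the $a_i$ divisible by $e$ (the even parts in $\mathcal{M}_e(n)$) do not affect $\vec{n}$ at all. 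This suggests splitting each element of $\mathcal{M}_e(n)$ into its ``even piece'' (the $a_i$ with $e \mid a_i$) and its ``odd piece'' (the remaining $a_i$, which are odd and not divisible by $e$).

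**Next I would** fix the $e$-core $\mu$ via its vector $\vec{n} = (n_0, \ldots, n_{e-1})$ and use formula \eqref{eq:njfromMullSym} of Proposition \ref{prop: BOnvectorfromMull} to understand which collections of odd $a_i$ produce exactly that $\vec{n}$. Since the odd parts are distinct (by Definition \ref{def: Me(n)=fixedpointsmullineux}(iii), as $a_i = a_{i+1}$ forces $a_i$ even) and not divisible by $e$, each contributes a residue $\frac{a_i - 1}{2} \bmod e$ with sign, and the constraint that $\vec{n}$ equals the fixed vector of $\mu$ becomes a condition on how many odd parts fall into each residue class. The plan is to show that, once $\vec{n}$ is pinned down, the freedom in choosing the odd parts is captured by a partition $\tau$ with odd parts (encoding the ``excess'' beyond the minimal configuration realizing $\mu$), while the even parts $a_i$ with $e \mid a_i$, each an odd multiple of $e$ after dividing out, organize into the $e/2$ partitions $\gamma^1, \ldots, \gamma^{e/2}$. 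The weight bookkeeping $|\tau| + 2\sum_i |\gamma^i| = w$ should emerge from computing the $e$-weight as $(\sum_j a_j - |\mu|)/e$ using \eqref{eq: weightofpcorefornvector}: the even parts contribute in multiples of $2e$ (hence the factor $2$ and the $e/2$ colors), and the odd parts contribute the remaining weight linearly.

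**The hard part will be** making precise the bijection on the odd-part side: verifying that after fixing $\vec{n}$, the residue classes mod $e$ of the available odd values $\{1, 3, \ldots, 2e-1\}$ organize exactly so that the remaining choices form a single partition with odd parts of total weight $|\tau|$, and that the difference conditions (ii), (iv), (v) in Definition \ref{def: Me(n)=fixedpointsmullineux} are automatically respected under the inverse map. I expect this to require carefully tracking how each residue $j$ can be hit by two odd values (namely $2j+1$ and $2(j - n_?)\pm \ldots$, i.e.\ values differing by $2e$) — this doubling is precisely what distinguishes the $e$-even case from the odd case and is why the multiplier $f_e$ involves $\frac{1}{(q,q^2)_\infty}$ (distinct odd contributions) times $e/2$ copies of a partition generating function. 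A clean way to finish is to match generating functions: show that summing over all $\mu$ of fixed $\vec{n}$ and all valid excess data reproduces $f_e(w)$ from Definition \ref{def:multipliers}, namely $\frac{1}{(q^2,q^2)_\infty^{e/2}(q,q^2)_\infty}$, where $\kappa(e/2, w)$ is manifestly the coefficient of $q^w$. Combined with the factorization $mf_{e,w}(n) = \kappa(e/2, w)\, sc_e(n - ew)$, which follows once the count is shown independent of the particular $\mu$, this yields Theorem \ref{thm: Countbyweight}(a).
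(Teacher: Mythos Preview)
Your high-level instinct to split into an ``odd piece'' and an ``even piece'' is right, but the roles you assign are reversed, and the key structural tool is missing. In the paper's proof, one first passes through Bessenrodt's bijection from $\mathcal{M}_e(n)=P_2(A',A'';2e,n)$ to $P_1(A',A'';2e,n)$, so that a fixed point corresponds to a pair $(\tau,e\gamma)$ with $\tau$ having \emph{distinct odd} parts and $\gamma$ having odd parts. It is the even piece $e\gamma$ that yields the \emph{single} partition with odd parts in $\kappa(e/2,w)$ (contributing $|\gamma|$ to $w$ with coefficient $1$), while the $e/2$-tuple of arbitrary partitions comes from $\tau$ via its $2e$-\emph{bar quotient} in the sense of Morris--Yaseen, contributing $2\sum_i|\rho^i|$ to $w$. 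Your proposal puts the $e/2$ colours on the even side and the single odd-part partition on the odd side; the weight bookkeeping then cannot come out as $|\tau|+2\sum|\gamma^i|=w$.

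There is also a more basic obstacle to working directly in $\mathcal{M}_e(n)$: the difference conditions (ii), (iv), (v) entangle consecutive parts regardless of parity, so the even and odd $a_i$ do not decouple until after Bessenrodt's bijection to $P_1$, where parts are genuinely independent. Finally, the step you flag as ``the hard part'' --- showing that, with $\vec n$ fixed, the freedom in the odd parts is exactly an $e/2$-tuple of partitions --- is precisely the content of the $2e$-bar core/quotient theory (Lemma \ref{lem:Countpartitionswithtbarcores}), applied to bar partitions with beads only on odd runners. Without invoking that machinery (or reproving it), the argument has no mechanism to produce the $e/2$ colours, and the generating-function fallback you sketch would at best recover the aggregate identity, not the block-by-block statement of Theorem \ref{thm: kappa}.
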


Consider the generating function in \eqref{eq:f_edef}. The first term is counting $e/2$ tuples of partitions with even parts and the second term counts partitions with odd parts. We can equally well count $e/2$ tuples of arbitrary partitions but then double all the parts. So Theorem \ref{thm: kappa} immediately implies Theorem \ref{thm: Countbyweight}(a), and all that remains is to prove Theorem \ref{thm: kappa}.

For the next result we will need the following observation, which is easy to see in \cite{BessenrodtGeneralisationsofAndrewsOlsson} (recalling that $N=2e$.)

\begin{lemma}\label{lem:Bessbijection} Suppose $\lambda \vdash n$ is a Mullineux fixed point with $G_e(\lambda)$ as in \eqref{eq: MullSymbolforfixedpoint} and corresponding partition $(a_1,a_2, \ldots, a_k) \in P_2(A',A''; N,n).$ Suppose the image partition under Bessenrodt's bijection is $(b_1, b_2, \ldots b_s) \in P_1(A',A''; N,n)$. Then both partitions have the same set of nonzero residues mod $2e$ (and thus also mod $e$.)
\end{lemma}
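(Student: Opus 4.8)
The plan is to establish Lemma~\ref{lem:Bessbijection} by unwinding the definition of Bessenrodt's bijection from \cite{BessenrodtGeneralisationsofAndrewsOlsson} and tracking what happens to residues modulo $2e$ at each step. The bijection between $P_2(A',A'';N,n)$ and $P_1(A',A'';N,n)$ is built iteratively: at each stage one extracts a part (or a block of equal parts) from the source partition and produces corresponding parts in the target, and the construction is designed so that the multiset of residues modulo $N=2e$ is an invariant. So first I would state precisely, citing the relevant construction in \cite{BessenrodtGeneralisationsofAndrewsOlsson}, that each elementary move in the bijection replaces a part by one or more parts whose residues mod $2e$ agree with the residue of the part being processed. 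The key combinatorial fact is that the bijection only splits or merges parts in ways that preserve residue classes mod $N$, never mixing residues from $A'$ with those from $A''$.

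The second step is to translate this residue-preservation statement into the language of the lemma. Since both $(a_1,\dots,a_k)$ and $(b_1,\dots,b_s)$ lie in sets indexed by $A=A'\cup A''$ with $A'=\{e\}$ and $A''=\{1,3,\dots,2e-1\}$, every part of either partition is congruent mod $2e$ to one of these allowed residues. The invariance of the residue multiset (with multiplicity) under the bijection gives in particular equality of the underlying residue \emph{sets}, which is what the lemma asserts. I would then note that since $e \mid 2e$, equality of residue sets mod $2e$ immediately forces equality of the sets of nonzero residues mod $e$, giving the parenthetical claim for free; the only subtlety is that the residue $e$ (an element of $A'$) reduces to $e \bmod e = 0$, but this is consistent since such parts contribute zero residue on both sides and are precisely the parts that Remark~\ref{rmk:aidivbyedonocontribute} tells us do not affect $\vec{n}$.

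I expect the main obstacle to be purely expository rather than mathematical: the lemma is essentially a bookkeeping consequence of how Bessenrodt's bijection is defined, so the difficulty lies in extracting the precise residue-tracking statement from \cite{BessenrodtGeneralisationsofAndrewsOlsson} without reproducing the entire construction. The bijection in that paper is stated for general $A',A''$ and $N$, and one must verify that in each of its cases (the various ways a part can be decomposed) the residue mod $N$ is genuinely conserved. If the published description makes the residue-preservation transparent, the proof is a one-line appeal to the construction; if not, I would need to isolate the single structural feature—that parts are only combined or split along arithmetic progressions with common difference $N=2e$—and cite or verify just that feature. The cleanest writeup would quote the specific step of the bijection and observe that it manifestly fixes residues mod $2e$, then conclude.
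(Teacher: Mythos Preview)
Your plan matches the paper's own treatment, which gives no argument beyond calling the lemma ``easy to see in \cite{BessenrodtGeneralisationsofAndrewsOlsson}.'' One correction, though: your intermediate claim that the bijection preserves the \emph{multiset} of residues modulo $N=2e$ is too strong and is in fact false. Already in the original Andrews--Olsson setting ($e=3$, $N=6$, $A=\{1,5\}$) one has $(6,1)\in\mathcal{M}_3(7)=P_2$ with residue multiset $\{0,1\}$, while the unique partition in $P_1$ of $7$ is $(7)$ with residue multiset $\{1\}$; neither the number of parts nor the multiset of residues is preserved, because parts divisible by $N$ can occur on the $P_2$ side but never on the $P_1$ side (since $0\notin A$). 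The same phenomenon persists in Bessenrodt's generalisation.

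What the bijection does preserve, and what the lemma actually asserts, is the \emph{set of nonzero residues} modulo $N$. So when you inspect the construction in \cite{BessenrodtGeneralisationsofAndrewsOlsson}, aim directly for that weaker invariant rather than trying to deduce it from a multiset statement that does not hold. With that adjustment your outline is fine, and your handling of the passage from residues mod $2e$ to residues mod $e$ (including the observation that the residue $e$ collapses to $0$) is correct.
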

Thus we can use the $b_i$'s to calculate $\vec{n}$ and the corresponding $e$-core and $e$-weight as in Section \ref{subsec:ecoreseweights}. Recall that Bessenrodt's $P_1(A',A''; N,n)$ in this case are partitions of $n$ with all parts either odd or odd multiples of $e$, and the odd parts must be distinct. Henceforth we will consider these as \emph{pairs} of partitions $\{(c_1,c_2, \ldots, c_k), e\mu\}$ where $(c_1,c_2, \ldots, c_k)$ has distinct odd parts and $\mu$ has  odd parts. Notice that the set of nonzero residues mod $e$ in $(c_1,c_2, \ldots, c_k)$ is the same as in $(b_1, b_2, \ldots b_t) $, since the odd multiples of $e$ all have $e$-residue zero. To complete the proof we will need to use the theory of $t$-bar cores and $t$-bar quotients.

\subsection{t-bar cores}
For partitions with distinct parts (sometimes called ``bar partitions"), Morris introduced \cite{MorrisTheSpinRepresentation} the notion of a $t$-bar core. These played a role in Humphrey's determination \cite{HumphreysBlocksofProjective} of blocks of projective representations of symmetric groups in characteristic $p$ similar to the role played by $p$-cores in the theory for $S_n$. We will follow the combinatorics introduced in \cite{MorrisYaseenCombinatorialShiftedYoung}, where a version of James' abacus notation is given for computing $t$-bar cores. Note this is different from the more well-known James' abacus display for arbitrary partitions using sets of $\beta$-numbers.

Given a partition $\lambda$ with distinct parts, we form an abacus with runners lying north to south labelled $\{0,1,2, \ldots, t-1\}$. The bead positions are labelled as below:

$$\begin{array}{ccccc}
    0 & 1 & \cdots & t-2 & t-1 \\
    t & t+1 & \cdots & 2t-2 & 2t-1 \\
    \vdots & \vdots & & \vdots & \vdots
  \end{array}$$
To display $\lambda$ on the abacus place a bead on the abacus corresponding to each part.  For example if $\lambda=(23, 21, 17, 13, 11, 9, 7)$ and $t=6$ we obtain the display in Figure \ref{fig:abacusdisplay}, where we have also included the labels on the runners.

  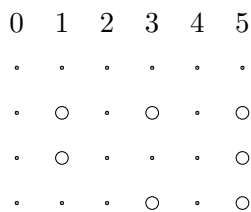
\begin{figure}[h]  \label{fig:abacusdisplay}
\centering

   \begin{tikzpicture}[scale=0.6]
\node at (0,0){0};
\node at (1,0){1};
\node at (2,0){2};
\node at (3,0){3};
\node at (4,0){4};
\node at (5,0){5};
%\filldraw[color=black, fill=black, very thick](0,-1) circle (0.14);
\draw(1,-3) circle(0.14);
\draw(1,-2) circle(0.14);
\draw(3,-2) circle(0.14);
\draw(3,-4) circle(0.14);
\draw(5,-2) circle(0.14);
\draw(5,-3) circle(0.14);
\draw(5,-4) circle(0.14);
\filldraw[color=black, fill=black, very thick](0,-1) circle (0.01);
\filldraw[color=black, fill=black, very thick](0,-2) circle (0.01);
\filldraw[color=black, fill=black, very thick](0,-3) circle (0.01);
\filldraw[color=black, fill=black, very thick](0,-4) circle (0.01);
\filldraw[color=black, fill=black, very thick](1,-1) circle (0.01);
\filldraw[color=black, fill=black, very thick](1,-4) circle (0.01);
\filldraw[color=black, fill=black, very thick](2,-1) circle (0.01);
\filldraw[color=black, fill=black, very thick](2,-2) circle (0.01);
\filldraw[color=black, fill=black, very thick](2,-3) circle (0.01);
\filldraw[color=black, fill=black, very thick](3,-1) circle (0.01);
\filldraw[color=black, fill=black, very thick](3,-3) circle (0.01);
\filldraw[color=black, fill=black, very thick](2,-4) circle (0.01);
\filldraw[color=black, fill=black, very thick](4,-1) circle (0.01);
\filldraw[color=black, fill=black, very thick](4,-2) circle (0.01);
\filldraw[color=black, fill=black, very thick](4,-3) circle (0.01);
\filldraw[color=black, fill=black, very thick](4,-4) circle (0.01);
\filldraw[color=black, fill=black, very thick](5,-1) circle (0.01);

    \end{tikzpicture}

\caption{Abacus display for $\lambda=(23, 21, 17, 13, 11, 9, 7)$ and $t=6$.}

\end{figure}

From $\lambda$ we can obtain its $t$-bar core, denoted $\tilde{\lambda}_{(t)}$, by performing two types of operations on the abacus. The first slides a bead on a runner up one into a vacant spot (beads reaching the location zero disappear). This corresponds to replacing a part $\lambda_i$ in the partition by $\lambda_i-t$ if $\lambda_i-t$ is not already a part, or removing a part $\lambda_i=t$. Alternately you can remove two beads at positions $a$ and $t-a$ in the first row. This corresponds to removing two parts of the partition equal to $a$ and $t-a$. Either operation has the effect of removing a $t$-bar (which we will not define) from the partition and reducing the number being partitioned by $t$. Eventually there are no further operations possible, and you reach the $t$-bar core $\tilde{\lambda}_{(t)}$. The total number of such operations is well-defined and called the $t$-bar weight. So in our example we obtain $\tilde{\lambda}_{(6)}=(9,5,3)$ with abacus display:

  \begin{figure}[h]
\centering

   \begin{tikzpicture}[scale=0.6]
\node at (0,0){0};
\node at (1,0){1};
\node at (2,0){2};
\node at (3,0){3};
\node at (4,0){4};
\node at (5,0){5};
%\filldraw[color=black, fill=black, very thick](0,-1) circle (0.14);

\draw(3,-1) circle(0.14);
\draw(3,-2) circle(0.14);
\draw(5,-1) circle(0.14);

\filldraw[color=black, fill=black, very thick](0,-1) circle (0.01);
\filldraw[color=black, fill=black, very thick](0,-2) circle (0.01);
\filldraw[color=black, fill=black, very thick](0,-3) circle (0.01);
\filldraw[color=black, fill=black, very thick](0,-4) circle (0.01);
\filldraw[color=black, fill=black, very thick](1,-1) circle (0.01);
\filldraw[color=black, fill=black, very thick](1,-2) circle (0.01);
\filldraw[color=black, fill=black, very thick](1,-3) circle (0.01);
\filldraw[color=black, fill=black, very thick](1,-4) circle (0.01);
\filldraw[color=black, fill=black, very thick](2,-1) circle (0.01);
\filldraw[color=black, fill=black, very thick](2,-2) circle (0.01);
\filldraw[color=black, fill=black, very thick](2,-3) circle (0.01);
\filldraw[color=black, fill=black, very thick](3,-3) circle (0.01);
\filldraw[color=black, fill=black, very thick](3,-4) circle (0.01);
\filldraw[color=black, fill=black, very thick](4,-1) circle (0.01);
\filldraw[color=black, fill=black, very thick](4,-2) circle (0.01);
\filldraw[color=black, fill=black, very thick](4,-3) circle (0.01);
\filldraw[color=black, fill=black, very thick](4,-4) circle (0.01);
\filldraw[color=black, fill=black, very thick](5,-2) circle (0.01);
\filldraw[color=black, fill=black, very thick](5,-3) circle (0.01);
\filldraw[color=black, fill=black, very thick](5,-4) circle (0.01);

    \end{tikzpicture}

\caption{Abacus display for $\tilde{\lambda}_{(6)}=(9,5,3)$}
  \label{fig:abacusdisplaycore}
\end{figure}
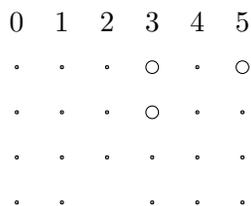

Much of the work done on $t$-bar cores has assumed $t$ is odd, because the combinatorics is not as nice when $t$ is even. For example if $t$ is even a partition may be a $t$-bar core without being a $2t$-bar core (e.g. for $\lambda$= $(3,1)$ and $t=2$ one can remove a $4$-bar but not a $2$-bar). Also the number of partitions with a given $t$-bar core and $t$ bar weight may depend on the choice of $t$-bar core and not just the weight. Neither of these pathologies happens for ordinary $t$-cores and rim $t$-hooks. However, as Olsson observes in \cite[p.235]{OlssonFrobeniussymbols}, these difficulties do not arise if there are no beads on runner $t/2$. This will be the case in our situation, where $t$ will equal $2e$ for $e$ even, and the partitions will have only odd parts.

\begin{remark}
  \label{rmk:2ebarquotient}
  Suppose $t=2e$ for $e$ even and let $\lambda$ have distinct odd parts. Then the abacus display for $\lambda$ with $t$ runners has beads only on runners with odd labels and, in particular, no beads on runner 0 or $e=t/2$. Moreover all partitions with distinct odd parts and the same set of residues mod $2e$ will have the same $2e$-bar core as $\lambda$.
\end{remark}

We now consider the $t$-bar quotient, following \cite{MorrisYaseenCombinatorialShiftedYoung}. Assume $t=2e$ for $e$ even, and $\lambda$ has distinct odd parts. Then we see that the $t$-bar quotient is a sequence of $e/2$ partitions, one for each pair of runners $\{(i,t-i) \mid i=1,3,5, \ldots, e-1\}$. The actual construction is in \cite[Theorem 2.2]{MorrisYaseenCombinatorialShiftedYoung} but we will need only the count:

\begin{lemma}\cite[Theorem 2.(2)]{MorrisYaseenCombinatorialShiftedYoung}
  \label{lem:Countpartitionswithtbarcores}
  The number of partitions with distinct odd parts having the same $2e$-bar core as $\lambda$ and $2e$-bar weight $\tilde{w}$ is the number of $e/2$ tuples of partitions of total weight $\tilde{w}$.
\end{lemma}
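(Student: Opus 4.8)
The plan is to read the count off directly from the $2e$-bar abacus and its quotient, exploiting the special structure forced by $\lambda$ having distinct odd parts. First I would record, via Remark \ref{rmk:2ebarquotient}, that on the $t=2e$ runner abacus every bead sits on an odd runner and that runners $0$ and $e=t/2$ are empty. This is exactly what tames the even-$t$ combinatorics: the involution $i \mapsto t-i$ on the runner labels $\{0,1,\ldots,t-1\}$ has precisely two fixed points, $0$ and $t/2=e$, and both are bead-free, so every occupied runner lies in a genuine conjugate pair $\{i,t-i\}$ with $i$ odd.

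Next I would count the pairs. The occupied runners are $\{1,3,\ldots,2e-1\}$, a total of $e$ runners, and these pair up under $i\mapsto 2e-i$ into the $e/2$ conjugate pairs $\{(i,t-i) : i=1,3,\ldots,e-1\}$. No odd runner is self-conjugate, since $i=2e-i$ forces $i=e$, which is even. This pair set is exactly the index set of the $t$-bar quotient of \cite{MorrisYaseenCombinatorialShiftedYoung} in our situation, so the quotient is an $e/2$-tuple of ordinary partitions, one per pair. The remaining task is to match the two bar-removal operations to moves within this tuple and to check that the bar weight is additive.

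The two elementary operations are: sliding a single bead up one position on its runner (lowering the corresponding part by $t$), and deleting a matched pair of top-row beads at positions $a$ and $t-a$ (removing two parts $a$ and $t-a$). The first acts within a single runner, while the second couples the two runners of a pair. The content of \cite[Theorem 2.2]{MorrisYaseenCombinatorialShiftedYoung} is that these operations decouple across the conjugate pairs: on each pair $\{i,t-i\}$ the bead configuration corresponds, under the usual abacus-to-partition dictionary, to an ordinary partition $\gamma^{(i)}$, and any single bar removal reduces exactly one of these partitions by one box while leaving the others fixed. Consequently the $2e$-bar core is reached precisely when all component partitions are empty; two partitions with distinct odd parts share a $2e$-bar core if and only if they occupy the same runners (equivalently have the same residues mod $2e$, as in Remark \ref{rmk:2ebarquotient}); and the $2e$-bar weight equals $\sum_i |\gamma^{(i)}|$. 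Fixing the core and setting the total weight to $\tilde{w}$ thus leaves precisely the freedom of choosing $e/2$ partitions of total size $\tilde{w}$, which is the asserted count.

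The step I expect to be the crux is verifying that the operations genuinely decouple, i.e. that the coupled move deleting beads at $a$ and $t-a$ reads as a single-box reduction of one component partition rather than mixing pairs. This is the heart of the Morris-Yaseen construction, and it is exactly where emptiness of runner $t/2=e$ is essential: a bead on runner $e$ would be its own conjugate and would generate a self-paired runner contributing a differently-behaved factor, which is precisely the even-$t$ pathology flagged just before Remark \ref{rmk:2ebarquotient}. Since our $\lambda$ has only odd parts that obstruction never occurs, and the clean $e/2$-tuple count follows.
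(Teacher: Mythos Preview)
The paper does not actually prove this lemma: it simply cites \cite[Theorem 2.2]{MorrisYaseenCombinatorialShiftedYoung} for the construction and quotes the enumerative consequence, so there is no in-paper argument to compare against. Your sketch is a faithful outline of what the Morris--Yaseen construction does in this situation --- beads only on odd runners, the $e/2$ conjugate pairs $\{i,2e-i\}$ each contributing one ordinary partition to the quotient, bar removals reading as single-box removals on the appropriate component, and additivity of the bar weight --- so you have supplied more detail than the paper itself. The one point to be careful about is that your account of the ``decoupling'' step is still a citation in disguise: you correctly identify it as the crux and attribute it to \cite{MorrisYaseenCombinatorialShiftedYoung}, which is exactly what the paper does, but if the intent were a self-contained proof you would need to actually write down the bijection between bead configurations on a conjugate pair of runners and ordinary partitions and verify that both bar moves act as claimed.
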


\noindent {\bf Proof of Theorem \ref{thm: kappa}}:
We are now ready to prove Theorem \ref{thm: kappa} by constructing a bijection between the Mullineux fixed points on the right and a set which has cardinality $\kappa(\frac{e}{2},w)$ on the left.

Suppose $\lambda \vdash n$ has $e$-weight $w$ and self-conjugate $e$-core $\lambda_{(e)}.$ Apply Bessenrodt's bijection to get a pair $\{\tau, e\gamma\}$ where $\tau=(c_1,c_2, \ldots, c_k)$ has distinct odd parts and $\gamma$ has all odd parts, and the nonzero residues mod $2e$ for $\tau$ are the same as for $\lambda$. In particular all the $\tau$ have the same $2e$-bar core. As in \cite{BessendrodtOlssonOnResidueSymbols}, we need to  check that that $2e$-bar core  $\tilde{\tau}_{(2e)}$ is the same size as the $e$-core of $\lambda$, i.e. that $|\lambda_{(e)}|=|\tilde{\tau}_{2e}|.$
Equation \ref{eq: weightofpcorefornvector} gives us $|\lambda_{(e)}|$ in terms of the $n_i$. Now the argument on the top of page 238 of \cite{BessendrodtOlssonOnResidueSymbols} proves that the size $|\tilde{\tau}_{(2e)}|$ as a function of the $n_i$ agrees with the formula in \eqref{eq: weightofpcorefornvector}.

Recall that when $e$ is odd  and $\lambda$ has even weight $w$,  all the corresponding partitions had the same $2e$-bar weight $w/2$. In this situation, with $e$ even, the $2e$-bar weight of $\tau$ can be less, with the ``difference" made up for by $e\gamma.$

The next step in the bijection is to calculate the $2e$-bar quotient of $\tau.$ Since $\tau$ has distinct odd parts, its $2e$-bar quotient is a tuple $(\rho^1,\rho^2, \cdots, \rho^{e/2})$ with $2e$-bar weight $\tilde{w}=\sum_i |\rho^i|$ and $$\tau \vdash  |\tilde{\tau}_{2e}|+2e\tilde{w}.$$

Now we can describe the bijection that proves Theorem \ref{thm: kappa}. Given $\lambda$ as above we send $\lambda$ to the tuple $(\rho^1,\rho^2, \cdots, \rho^{e/2}; e\gamma)$. The calculations above show that
\begin{eqnarray*}
% \nonumber % Remove numbering (before each equation)
  w&=&\frac{n-|\lambda_{(e)}|}{e}\\
   &=& \frac{|\tau|+e|\gamma|-|\tilde{\tau}_{2e}|}{e} \\
   &=& \frac{|\tau|-|\tilde{\tau}_{2e}|}{e}+|\gamma|\\
   &=&2\sum_i |\rho^i|+|\gamma|.
\end{eqnarray*}
So $(\rho^1,\rho^2, \cdots, \rho^{e/2}; e\gamma)$ is in the set enumerated by $\kappa(e/2, w).$ 

The final necessary observation is that any choice of $2e$-bar quotient concentrated on the odd runners together with $e\gamma$ satisfying the weight condition $w=2\sum_i |\rho^i|+|\gamma|$ will correspond to an original $\lambda$ in the correct block. This is again the observation from \cite{BessendrodtOlssonOnResidueSymbols} that adding or removing a $2e$ bar does not change the vector $\hat{n}$, nor does the choice of $e\gamma$. So the map is surjective. Injectivity follow from the fact that Bessenrodt's map is bijection, and that a partition is uniquely determined by its $t$-bar core and $t$-bar quotient for any $t$.

%\section{Remarks}
\bibliography{references0824}{}
\bibliographystyle{alpha}

\end{document}